\newtheorem{theorem}{\bf Theorem}[section]
\newtheorem{definition}[theorem]{\bf Definition}
\newtheorem{remark}[theorem]{\bf Remark}
\newtheorem{lemma}[theorem]{\bf Lemma}
\newsavebox{\savepar}
\begin{document}
	\title{Existence of positive solutions for a singular elliptic problem with critical exponent and
		measure data}
		\author{Akasmika Panda$^\ddagger$, Debajyoti Choudhuri$^\ddagger$, Ratan Kr. Giri$^{\dagger }$\footnote{Corresponding
				author} \\
			\small{$^\ddagger$Department of Mathematics, National Institute of Technology Rourkela}\\
			\small{Rourkela - 769008, India}\\
			\small{$^\dagger$ Mathematics department, Technion - Israel Institute of Technology,}\\ \small{Amado building, Haifa 32000, Israel
			}\\
			\small{Emails: akasmika44@gmail.com, dc.iit12@gmail.com, giri90ratan@gmail.com}			
		}
		\date{\today}
		\maketitle
		\begin{abstract}
\noindent We prove the existence of a positive {\it SOLA (Solutions Obtained as Limits of Approximations)} to the following PDE involving fractional power of Laplacian
			\begin{equation}
			\begin{split}
			(-\Delta)^su&= \frac{1}{u^\gamma}+\lambda u^{2_s^*-1}+\mu ~\text{in}~\Omega,\\
			u&>0~\text{in}~\Omega,\\
			u&= 0~\text{in}~\mathbb{R}^N\setminus\Omega.
			\end{split}
			\end{equation}
Here, $\Omega$ is a bounded domain of $\mathbb{R}^N$, $s\in (0,1)$, $2s<N$, $\lambda,\gamma\in (0,1)$, $2_s^*=\frac{2N}{N-2s}$ is the fractional critical Sobolev exponent and $\mu$ is a nonnegative bounded Radon measure in $\Omega$.\\
			\textbf{Keywords:} Fractional Sobolev spaces, SOLA, Radon measure, Marcinkiewicz space, critical exponent.\\
			\textbf{2010 AMS Classification:}  35J60, 35R11, 35A15.
		\end{abstract}
		\section{Introduction}
	In this paper, we discuss the following fractional elliptic problem with a singularity, a critical exponent and a Radon measure.
	\begin{equation}\label{4p1}\tag{$P_\lambda$}
			\begin{split}
			(-\Delta)^su&= \frac{1}{u^\gamma}+\lambda u^{2_s^*-1}+\mu ~\text{in}~\Omega,\\
			u&>0~\text{in}~\Omega,\\
			u&= 0~\text{in}~\mathbb{R}^N\setminus\Omega,
			\end{split}
			\end{equation}
			where $\Omega$ is a bounded domain in $\mathbb{R}^N$ with $C^2$ boundary, $s\in(0,1)$, $N>2s$, $0<\gamma<1$, $\lambda>0$, $\mu$ is a nonnegative bounded Radon measure and $(-\Delta)^s$ is the fractional Laplacian defined by 
			$$(-\Delta)^su=\text{P. V.}\int_{\mathbb{R}^N}\frac{u(x)-u(y)}{|x-y|^{N+2s}}dy.$$
Problems involving nonlocal operators have theoretical applications as well as real life applications in various fields of science. The applications of fractional order Laplacian can be found in L\'{e}vy stable diffusion process, chemical reactions in liquids, geophysical fluid dynamics, electromagnetism etc (refer \cite{Applebaum} for further details). Nonlocal problems containing singular or irregular data are used in dislocation problems \cite{Cacase}, quasi-geostrophic dynamics \cite{Caffarelli}, image reconstruction problems \cite{Gilbao} etc. The problem of denoising a image is to find a clear image $u$ from a noisy $f$. In the deblurring problem, a given image $f$ is considered as a blurry version of an unknown exact image $u$, which is to be determined. For further details refer Kinermann et al. \cite{kinder}. Readers may refer to the work in \cite{toma}, \cite{yaro1}, \cite{yaro2}.\\
In general, the presence of a measure data in the problem weakens the class of solution space, i.e. we lose some degrees of differentiability or/and integrability of the solution space. Solutions to problems involving measure data or $L^1$ data are obtained by approximations and usually by working in Marcinkiewicz spaces. Readers may refer \cite{Benilian}, \cite{Boccardo 1},\cite{Boccardo 2},\cite{Kuusi} and the references therein for further readings on these types of problems. Boccardo et al. (\cite{Boccardo 1},\cite{Boccardo 2}) proved that the solution to a nonlinear elliptic equation involving $p$-Laplacian and a Radon measure lies in $W^{1,q}_0(\Omega)$ for every $q<\frac{N(p-1)}{N-1}$, where $1<p<N$. Recently, in 2015, Kuusi et al. \cite{Kuusi} considered the problem, as in \cite{Boccardo,Boccardo 1}, in a fractional set up with fractional $p$-Laplacian and established the existence of a solution in $W^{s_1,q}(\Omega)$ for every $s_1<s<1$, $q<\min\{\frac{N(p-1)}{N-s},p\}$. Purely singular problems both in the local and nonlocal cases are studied in \cite{Boccardo},\cite{Canino},\cite{Lazer}, etc. and the references therein. In all these articles the choice of a solution space depends on the power $\gamma$ of the singular term (whether $\gamma\leq1$ or $\gamma>1$). Further, we refer \cite{Bisci 1},\cite{Bisci 2},\cite{Servadei}, etc. to survey Brezis$-$Nirenberg type critical exponent problems (without the singular term and measure data).\\
The problem $(P_\lambda)$ for $\lambda=0$ and the limiting case of $s=1$ has been analyzed by Panda et al. in \cite{Panda}. The authors have guaranteed the existence of a weak solution in $W_0^{1,q}(\Omega)$ if $0<\gamma\leq1$  and in $W_{loc}^{1,q}(\Omega)$ if $\gamma>1$ for every $q<\frac{N}{N-1}$. Ghosh et al. in \cite{Ghosh} extended this result and studied the problem $(P_\lambda)$ with $s\in(0,1)$ and $\lambda=0$. \\
We use the relation among the fractional Sobolev space, Bessel potential space, Marcinkiewicz space to find a solution in a function space weaker than $H_0^s(\Omega)$. Such solutions are called as SOLA (see Definition $\ref{SOLA}$). Due to the presence of nonlinearities with a critical exponent, singularity and a measure data, difficulties arise in the study of $(P_\lambda)$. Thus, it is not easy to directly approach the problem with any commonly used tools like variational method, Nehari manifold method, etc. We study our main problem via a sequence of approximating problems. It is very challenging to prove the existence of a solution to the approximating problems and simultaneously showing the boundedness of the sequence of solution to these approximating problems in $L^{2_s^*}(\Omega)$. To overcome these difficulties we take the help of two auxiliary problems. For that, we apply the concentration compactness principle as in \cite{Mosconi} and Ekeland's variational principle as in \cite{Ekeland}. Precisely, we guarantee that the approximating problem admits at least one solution in a complete Hilbert manifold $H=\{u\in H_0^s(\Omega):\|u\|_{L^{2_s^*}(\Omega)}=1 \}$. We follow some of the arguments of \cite{Panda} to prove our main result stated in the following theorem.
	\begin{theorem}\label{main theorem}
	There exists $0<\Lambda<\infty$ such that for $\lambda\in(0,\Lambda)$ the problem $(P_\lambda)$ admits a positive SOLA $u\in W_{0}^{s_1,q}(\Omega)$ for every $s_1<s$ and $q<\frac{N}{N-s}$ in the sense of  Definition $\ref{SOLA}$.	
\end{theorem}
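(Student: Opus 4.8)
The plan is to prove existence of a positive SOLA by an approximation scheme, replacing the singular and critical terms by truncated, regularized versions and then passing to the limit. I will work in three nested layers: (i) solve a regularized problem with a truncated singularity $\tfrac{1}{(u+1/n)^\gamma}$, a truncated critical nonlinearity, and a mollified datum $\mu_n$ (smooth, with $\mu_n \rightharpoonup \mu$ weakly-$*$ and $\|\mu_n\|_{L^1} \le \|\mu\|_{\mathcal M}$); (ii) establish uniform a priori estimates on the approximate solutions $u_n$; (iii) pass to the limit to recover a solution of $(P_\lambda)$ in the SOLA sense.

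\medskip

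\noindent\textbf{Step 1 (existence for the approximate problems).} For each fixed $n$, the nonlinearity is bounded and non-singular, so I would obtain a solution $u_n \in H_0^s(\Omega)$ by a variational argument. Because the critical exponent destroys the compactness of the Sobolev embedding $H_0^s(\Omega)\hookrightarrow L^{2_s^*}(\Omega)$, the direct method fails, and this is where the concentration-compactness principle of \cite{Mosconi} and Ekeland's variational principle \cite{Ekeland} enter. Following the strategy announced in the introduction, I would restrict to the Hilbert manifold $H=\{u\in H_0^s(\Omega):\|u\|_{L^{2_s^*}(\Omega)}=1\}$ and, via the two auxiliary problems mentioned there, produce a minimizer below the critical threshold energy level (of order $S^{N/2s}$, $S$ the best fractional Sobolev constant), for which concentration-compactness rules out loss of mass and yields a genuine critical point, hence a weak solution $u_n>0$ in $\Omega$.

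\medskip

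\noindent\textbf{Step 2 (uniform estimates).} This is the crux. Using $u_n$ (or a truncation/power of it) as a test function and the Marcinkiewicz-space machinery, I would derive a bound on $\|u_n\|_{L^{2_s^*}(\Omega)}$ that is uniform in $n$, using the restriction $\lambda<\Lambda$ to keep the critical term subcritical in energy. The singular term $\tfrac{1}{u_n^\gamma}$ with $0<\gamma<1$ is controlled by a lower bound $u_n \ge c\, \omega > 0$ on compact subsets (comparison with the solution of the purely singular problem, as in \cite{Ghosh,Panda}), so $\tfrac{1}{u_n^\gamma}$ stays locally bounded and the singular contribution in the weak formulation is handled by dominated/monotone convergence. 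The measure term, together with the sublinear singular term, forces the solution out of $H_0^s$; exploiting the relation among fractional Sobolev, Bessel potential and Marcinkiewicz spaces, I would show $u_n$ is bounded in the Marcinkiewicz space and then, via the standard estimate on the gradient/fractional-gradient, bounded in $W_0^{s_1,q}(\Omega)$ for every $s_1<s$ and $q<\tfrac{N}{N-s}$. The main obstacle is the simultaneous presence of the critical exponent and the measure: the $L^{2_s^*}$ bound needed to tame the critical term must be obtained \emph{independently} of the Marcinkiewicz estimates that accommodate the measure, and reconciling these two competing requirements (hence the need for the auxiliary problems and the smallness of $\lambda$) is the delicate point.

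\medskip

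\noindent\textbf{Step 3 (passage to the limit).} With the uniform $W_0^{s_1,q}$ bound, I would extract a subsequence $u_n \rightharpoonup u$ weakly in $W_0^{s_1,q}(\Omega)$ and, by compactness of the embedding $W_0^{s_1,q}(\Omega)\hookrightarrow L^1(\Omega)$, strongly in $L^1$ and a.e.\ in $\Omega$. Almost-everywhere convergence together with the locally uniform lower bound $u\ge c\,\omega>0$ lets me pass to the limit in the singular term by dominated convergence against a fixed test function $\varphi\in C_c^\infty(\Omega)$. For the critical term I would use the a.e.\ convergence and the uniform $L^{2_s^*}$ bound to identify the weak limit of $u_n^{2_s^*-1}$ (invoking the Brezis--Lieb-type splitting from the concentration-compactness analysis to exclude energy concentration, again using $\lambda<\Lambda$). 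Finally $\mu_n \rightharpoonup \mu$ handles the datum, and the limit $u$ satisfies the weak formulation defining a SOLA in the sense of Definition \ref{SOLA}, with positivity inherited from the lower bound. This completes the proof.
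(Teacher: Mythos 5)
Your proposal follows essentially the same route as the paper: existence for the approximating problems via the two auxiliary problems, Ekeland's variational principle and concentration--compactness on the manifold $H$; uniform $W_0^{s_1,q}$ bounds via truncation test functions, Marcinkiewicz estimates and the Bessel potential spaces; and passage to the limit using a.e.\ convergence, the uniform $L^{2_s^*}$ bound (which in the paper is automatic, since $u_n\in H$ forces $\|u_n\|_{L^{2_s^*}(\Omega)}=1$) and dominated convergence for the singular and nonlocal terms. The only cosmetic deviations are that the paper does not truncate the critical nonlinearity in $(P_{\lambda,n})$ and does not need a separate argument for the $L^{2_s^*}$ bound, but these do not change the substance of the argument.
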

\noindent Before ending this section we describe the {organization} of the paper. In Section $\ref{2}$, we introduce suitable function spaces to deal with our problem and also provide some auxiliary results which will play important roles throughout the article. In Section $\ref{sec 2}$, we prove the existence of a weak solution to the approximating problem for a certain range of $\lambda$. Section $\ref{sec 3}$ is devoted to the proof of Theorem $\ref{main theorem}$. Further, in the Appendix, we show the multiplicity of solutions {using the Nehari manifold.}
\section{Functional settings and auxiliary results}\label{2}
Let $\Omega$ be a bounded domain in $\mathbb{R}^N$, $1\leq p<\infty$ and $s\in (0,1)$. The fractional order Sobolev space (refer \cite{Valdinocci}) is defined as
$$W^{s,p}(\mathbb{R}^N)=\left\{u\in L^p(\mathbb{R}^N):\int_{\mathbb{R}^{2N}}\frac{{|u(x)-u(y)|^p}}{|x-y|^{N+sp}}dxdy<\infty\right\}$$ and $W_0^{s,p}(\Omega)$ is a subspace of $W^{s,p}(\mathbb{R}^N)$ given by 
$$W_0^{s,p}(\Omega)=\{u\in W^{s,p}(\mathbb{R}^N): \int_{\mathbb{R}^{2N}}\frac{|u(x)-u(y)|^{p}}{|x-y|^{N+sp}}dydx<\infty,~u=0 \text{ in }\mathbb{R}^N\setminus\Omega\}$$ equipped with the norm $$\|u\|_{W_0^{s,p}(\Omega)}^p=\int_{\mathbb{R}^{2N}}\frac{|u(x)-u(y)|^{p}}{|x-y|^{N+sp}}dydx.$$
\noindent Further, the space $(W_0^{s,p}(\Omega),\|\cdot\|_{W_0^{s,p}(\Omega)})$ with $p>1$ is a reflexive separable Banach space. The following classical theorem will be used frequently in this article.
\begin{theorem}[Theorem 6.5, \cite{Valdinocci}]\label{constant}
	Let $0<s<1$ and $p\in[1,\infty)$ with $sp<N$. Then there exists constant $C=C(N,s,p)>0$ such that for any $u\in W_0^{s,p}(\Omega)$, 
	$$\|u\|_{L^r(\Omega)}\leq C \|u\|_{W_0^{s,p}(\Omega)} $$
	for any $r\in[p,p_s^*]$, where $p_s^*=\frac{Np}{N-sp}$. Moreover, the space $W^{s,p}_0(\Omega)$ is continuously embedded in $L^r(\Omega)$ for every $r\in[1,p_s^*]$ and compactly embedded in $L^r(\Omega)$ for every $r\in[1,p_s^*)$.
\end{theorem}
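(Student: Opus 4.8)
The plan is to deduce every assertion from a single sharp ingredient, the global fractional Gagliardo--Sobolev inequality on $\mathbb{R}^N$, and then propagate it to the full stated range of exponents by interpolation on the bounded set $\Omega$. First I would reduce the problem to $\mathbb{R}^N$. Any $u\in W_0^{s,p}(\Omega)$ is, by the definition adopted above, extended by zero outside $\Omega$, so that $\|u\|_{L^r(\Omega)}=\|u\|_{L^r(\mathbb{R}^N)}$ while the double integral defining $\|u\|_{W_0^{s,p}(\Omega)}$ is exactly the Gagliardo seminorm of this extension over $\mathbb{R}^{2N}$. Hence it suffices to prove the critical inequality $\|u\|_{L^{p_s^*}(\mathbb{R}^N)}\le C\,\|u\|_{W_0^{s,p}(\Omega)}$ with $C=C(N,s,p)$, and by a standard truncation and mollification density argument it is enough to establish it for $u\in C_c^\infty(\mathbb{R}^N)$.

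The technical heart, and the step I expect to be the main obstacle, is this critical inequality, which I would prove by the dyadic averaging scheme of \cite{Valdinocci}. Taking $u\ge 0$ without loss of generality (since $\bigl||u(x)|-|u(y)|\bigr|\le|u(x)-u(y)|$ forces $\|\,|u|\,\|_{W_0^{s,p}(\Omega)}\le\|u\|_{W_0^{s,p}(\Omega)}$), I would control the pointwise value $u(x)$ by a telescoping sum of ball averages $u_{B_j}=|B_j|^{-1}\int_{B_j}u$ over the dyadically shrinking balls $B_j=B_{2^{-j}}(x)$, using that $u_{B_j}\to u(x)$ as $j\to\infty$ and $u_{B_j}\to 0$ as $j\to-\infty$ to write $u(x)=\sum_j(u_{B_{j+1}}-u_{B_j})$. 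Each increment is dominated, via Jensen's inequality and $B_{j+1}\subset B_j$, by the average of $|u(\xi)-u(\eta)|$ over $B_j\times B_j$, and there $|\xi-\eta|\le 2^{-j+1}$, so that $|\xi-\eta|^{-(N+sp)}$ is comparable to $2^{j(N+sp)}$ and the increment is controlled by the Gagliardo difference quotient localized to that pair of balls. Summing, applying Hölder in the summation index, and integrating in $x$, the kernel weights $2^{j(N+sp)}$ balance against the annular measures so that the exponent $p_s^*=\frac{Np}{N-sp}$ emerges exactly; the delicate point is precisely the bookkeeping of these competing powers.

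With the critical continuous embedding $W_0^{s,p}(\Omega)\hookrightarrow L^{p_s^*}(\Omega)$ in hand, all remaining continuous embeddings are immediate from $|\Omega|<\infty$: Hölder's inequality gives $\|u\|_{L^r(\Omega)}\le|\Omega|^{\frac1r-\frac{1}{p_s^*}}\|u\|_{L^{p_s^*}(\Omega)}$ for every $r\in[1,p_s^*]$, which in particular covers the displayed range $r\in[p,p_s^*]$ and therefore yields the asserted continuous embedding for all $r\in[1,p_s^*]$.

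For the compact embedding into $L^r(\Omega)$ with $r<p_s^*$, I would invoke the Riesz--Fréchet--Kolmogorov criterion. Given a bounded sequence $(u_n)$ in $W_0^{s,p}(\Omega)$, the elementary translation estimate $\|u_n(\cdot+h)-u_n\|_{L^p(\mathbb{R}^N)}^p\le C\,|h|^{sp}\,\|u_n\|_{W_0^{s,p}(\Omega)}^p$ (obtained by inserting the difference $u(\xi)-u(\eta)$ through averaging over a ball of radius $|h|$ and bounding the kernel below) provides uniform equicontinuity of translates in $L^p$, while the common support in $\Omega$ provides tightness; hence $(u_n)$ is precompact in $L^p(\Omega)$, say $u_n\to u$ strongly. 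For $r\in(p,p_s^*)$ the interpolation inequality $\|u_n-u\|_{L^r(\Omega)}\le\|u_n-u\|_{L^p(\Omega)}^{\theta}\|u_n-u\|_{L^{p_s^*}(\Omega)}^{1-\theta}$, combined with the uniform $L^{p_s^*}$ bound coming from the continuous embedding, upgrades this to strong convergence in $L^r$, and the cases $r<p$ follow once more from $|\Omega|<\infty$. This yields precompactness in $L^r(\Omega)$ for every $r\in[1,p_s^*)$ and completes the proof.
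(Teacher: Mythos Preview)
The paper does not supply its own proof of this statement: Theorem~\ref{constant} is quoted verbatim from \cite{Valdinocci} (the ``Hitchhiker's guide'') and is invoked as a black box throughout. There is therefore no in-paper argument to compare against.

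That said, your proposed route is sound and is precisely the strategy carried out in the cited reference. The reduction to $\mathbb{R}^N$ via zero extension is immediate from the definition of $W_0^{s,p}(\Omega)$ used here; the dyadic telescoping of ball averages is the mechanism by which \cite{Valdinocci} obtains the critical Gagliardo--Sobolev inequality (their Theorem~6.5, via Lemma~6.2 and the Maz'ya-type isoperimetric/coarea technology in Sections~6--7); and the downgrade to all $r\le p_s^*$ by H\"older on a bounded set is exactly Corollary~6.2 there. For compactness, your Riesz--Fr\'echet--Kolmogorov argument based on the translation estimate $\|u(\cdot+h)-u\|_{L^p}\le C|h|^{s}[u]_{W^{s,p}}$ is the standard one (this estimate is, for instance, Proposition~2.88 in Bahouri--Chemin--Danchin and is also the content of Theorem~7.1 in \cite{Valdinocci}); the interpolation step lifting $L^p$-precompactness to $L^r$ for $r\in(p,p_s^*)$ using the uniform $L^{p_s^*}$ bound is correct. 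In short, your plan reproduces the argument of the source the paper defers to, with no gaps of substance.
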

\noindent Denote 
\begin{equation}\label{best}
\mathbb{S}_{s,p}=\underset{u\in W_0^{s,p}(\Omega)\setminus\{0\}}{\inf}\frac{\|u\|_{W_0^{s,p}(\Omega)}^p}{\|u\|_{L^{p_s^*}(\Omega)}^p}
\end{equation}
which is the best Sobolev constant in the Sobolev embedding (Theorem $\ref{constant}$). We now define some function spaces which will be further used in this article.
\begin{remark}
For $p=2$, we denote the Sobolev space $H^s(\mathbb{R}^N)=W^{s,2}(\mathbb{R}^N)$. These spaces are Hilbert spaces. Proposition 3.6 of \cite{Valdinocci} provides the relationship between the fractional Sobolev space $H^s(\mathbb{R}^N)$ and the fractional Laplacian $(-\Delta)^s$. It states that the norms $\|\cdot\|_{H^s(\mathbb{R}^N)}$ and $\|(-\Delta)^{s/2}\cdot\|_{L^2(\mathbb{R}^N)}$ are two equivalent norms.
\end{remark}
\begin{definition}[\cite{Shieh}]\label{Bessel}
For $s\in (0,1)$ and $p\in [1,\infty)$, the Bessel potential space $L^{s,p}(\mathbb{R}^N)$ is defined as
$$L^{s,p}(\mathbb{R}^N)=\{u\in L^{p}(\mathbb{R}^N):|\nabla^s u|\in L^{p}(\mathbb{R}^N)\}$$
where $\nabla^s u=\int_{\mathbb{R}^N}\frac{u(x)-u(y)}{|x-y|^{N+s}}\frac{x-y}{|x-y|}dy$ is the fractional gradient of order $s$.
\end{definition}
\noindent We refer Theorem 2.2 of \cite{Shieh} to see the relation between the fractional Sobolev spaces and the Bessel potential spaces.
\begin{theorem}\label{properties}
\begin{enumerate}
	\item For $s\in(0,1)$ and $p=2$, $L^{s,2}(\mathbb{R}^N)=W^{s,2}(\mathbb{R}^N)$.
	\item For $s\in (0,1)$, $1<p<\infty$ and $0<\epsilon<s$, the following continuous embedding holds
	$$L^{s+\epsilon,p}(\mathbb{R}^N)\subset W^{s,p}(\mathbb{R}^N)\subset L^{s-\epsilon,p}(\mathbb{R}^N).$$ 
\end{enumerate}
\end{theorem}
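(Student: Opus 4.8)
The plan is to pass to the Fourier side, where both the fractional gradient $\nabla^s$ and the Gagliardo seminorm of $W^{s,p}$ become Fourier multipliers of order $s$, and then to compare the two. First I would compute the symbol of the fractional gradient. A direct computation with the principal-value kernel $\frac{x-y}{|x-y|^{N+s+1}}$ shows that $\widehat{\nabla^s u}(\xi) = c_{N,s}\, i\frac{\xi}{|\xi|}|\xi|^{s}\,\hat u(\xi)$ for a dimensional constant $c_{N,s}>0$, so that pointwise $|\widehat{\nabla^s u}(\xi)| = c_{N,s}|\xi|^{s}|\hat u(\xi)|$. On the other hand, the classical Fourier identity for the Gagliardo seminorm gives
\begin{equation*}
\int_{\mathbb{R}^{2N}}\frac{|u(x)-u(y)|^{2}}{|x-y|^{N+2s}}\,dx\,dy = C_{N,s}\int_{\mathbb{R}^N}|\xi|^{2s}|\hat u(\xi)|^{2}\,d\xi,
\end{equation*}
so both quantities are, up to constants, the squared $L^2$ norm of $|\xi|^{s}\hat u$.

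For part (1), the case $p=2$, this immediately closes the argument by Plancherel: $\|\nabla^s u\|_{L^2}^2 = c_{N,s}^2\int_{\mathbb{R}^N}|\xi|^{2s}|\hat u|^2\,d\xi$ while the Gagliardo seminorm equals $C_{N,s}\int_{\mathbb{R}^N}|\xi|^{2s}|\hat u|^2\,d\xi$, so the homogeneous seminorms of $L^{s,2}(\mathbb{R}^N)$ and $W^{s,2}(\mathbb{R}^N)$ differ only by a multiplicative constant. Adding the common term $\|u\|_{L^2(\mathbb{R}^N)}$ then shows that the two full norms are equivalent, whence the spaces coincide.

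Part (2), the general range $1<p<\infty$, is the real obstacle, because for $p\neq 2$ Plancherel is unavailable and the two scales of spaces genuinely differ; one can only expect embeddings with a loss of $\epsilon$ derivatives rather than an identity. Here I would argue through Littlewood--Paley theory, identifying $L^{s,p}(\mathbb{R}^N)$ with the Triebel--Lizorkin space $F^{s}_{p,2}$ (the Bessel-potential scale) and $W^{s,p}(\mathbb{R}^N)$ with the Besov space $B^{s}_{p,p}$, since the Slobodeckij seminorm coincides with the $B^{s}_{p,p}$ norm. The desired chain then follows from the standard lifting and fine-index embeddings: a gain of $\epsilon$ smoothness allows one to alter the microscopic summation index freely, so that $F^{s+\epsilon}_{p,2}\hookrightarrow B^{s+\epsilon}_{p,\infty}\hookrightarrow B^{s}_{p,p}$ yields $L^{s+\epsilon,p}(\mathbb{R}^N)\subset W^{s,p}(\mathbb{R}^N)$, while $B^{s}_{p,p}\hookrightarrow B^{s-\epsilon}_{p,\min(p,2)}\hookrightarrow F^{s-\epsilon}_{p,2}$ yields $W^{s,p}(\mathbb{R}^N)\subset L^{s-\epsilon,p}(\mathbb{R}^N)$.

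The technical heart of part (2) is establishing the $L^p$-continuity of the operators relating these norms, which I would obtain from the Mikhlin--Hörmander multiplier theorem after verifying that the symbol $\frac{\xi}{|\xi|}|\xi|^{s}$, suitably tempered by Bessel potentials, satisfies the required derivative bounds away from the origin. The $\epsilon$-loss is precisely what makes these multiplier estimates robust across all $p$ rather than only at $p=2$, so I expect the delicate point to be controlling the fine (second) index when passing between the Besov and Triebel--Lizorkin scales, and keeping the constants uniform as $\epsilon\to 0$ is where the argument is most likely to require care.
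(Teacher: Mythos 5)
The paper does not actually prove this statement: it is quoted verbatim as Theorem 2.2 of the cited reference of Shieh and Spector, so there is no in-paper argument to compare yours against. That said, your sketch is correct and is essentially the standard route behind that reference. Part (1) is fine: the symbol of $\nabla^s$ is indeed $c_{N,s}\,i\,\tfrac{\xi}{|\xi|}|\xi|^s$ (the odd part of the kernel kills the $u(x)$ term in the principal value, and homogeneity plus rotational covariance give the form of the multiplier), and combined with the Fourier representation of the Gagliardo seminorm, Plancherel closes the case $p=2$. For part (2), your chain of embeddings $F^{s+\epsilon}_{p,2}\hookrightarrow B^{s+\epsilon}_{p,\infty}\hookrightarrow B^{s}_{p,p}$ and $B^{s}_{p,p}\hookrightarrow B^{s-\epsilon}_{p,\min(p,2)}\hookrightarrow F^{s-\epsilon}_{p,2}$ is the standard one and is valid. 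The one point you should make explicit rather than leave implicit is that identifying $L^{s,p}(\mathbb{R}^N)$ as defined here (via $u\in L^p$ and $|\nabla^s u|\in L^p$) with the classical Bessel potential scale $F^{s}_{p,2}=(I-\Delta)^{-s/2}L^p$ is itself the nontrivial content for $p\neq 2$; it requires both directions, i.e.\ recovering $(I-\Delta)^{s/2}u\in L^p$ from $\nabla^s u\in L^p$ by inverting the Riesz transform vector (using $\sum_j R_j^2=-I$) together with the Mikhlin--H\"ormander theorem, which you do flag. Finally, your worry about uniformity of constants as $\epsilon\to 0$ is a non-issue here: the statement fixes $\epsilon\in(0,s)$, and the embedding constants are allowed to (and do) degenerate as $\epsilon\to 0$.
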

\begin{definition}
A measurable function $u:\Omega\rightarrow\mathbb{R}$ is said to be in the Marcinkiewicz space $M^q(\Omega)$ $(0<q<\infty)$ if $$m(\{x\in\Omega:|u(x)|>t\})\leq \frac{C}{t^q},~~\text{for }t>0 \text{ and }0<C<\infty.$$
\end{definition}
\begin{remark}
For $\Omega$ bounded, 
\begin{enumerate}
	\item $M^{q_1}(\Omega)\subset M^{q_2}(\Omega)$ for every $q_1\geq q_2>0$.
	\item For $1\leq q<\infty$ and $0<\epsilon<q-1$, the following continuous embedding holds
	\begin{equation}\label{continuous}
	L^q(\Omega)\subset 	M^q(\Omega)\subset 	L^{q-\epsilon}(\Omega).
	\end{equation}
\end{enumerate}
\end{remark}
\noindent For a fixed $k>0$, we denote the truncation functions $T_k:\mathbb{R}\rightarrow \mathbb{R}$ by
\begin{eqnarray}
\begin{split}
T_k(s)=&\begin{cases}
s&\text{if}~ |s|\leq k\\ 
k~sign{s} &\text{if}~ |s|>k.\nonumber 
\end{cases}
\end{split}
\end{eqnarray}
\noindent Since our problem, defined in $(P_\lambda)$, involves a measure data as a nonhomogeneous term in the right hand side, we need to introduce the notion of convergence in measure.
\begin{definition}\label{measure}
	Let $\mathcal{M}(\Omega)$ be the set of all finite Radon measures on $\Omega$ and $(\mu_n)$ be a sequence of measurable functions in $\mathcal{M}(\Omega)$. Then we say $(\mu_n)$ converges to $\mu\in \mathcal{M}(\Omega)$ in the sense of measure if $$\int_{\Omega}\mu_n \phi\rightarrow\int_\Omega \phi d\mu,~~\forall \phi \in C_0(\bar{\Omega}).$$  
\end{definition}
\noindent In the following theorem we state a commonly used variational principle, introduced by Ekeland in \cite{Ekeland}. Ekeland variational principle is also used to show multiplicity.
	\begin{theorem}\label{eke}
	(Ekeland Variational Principle $\cite{Ekeland}$) Let $V$ be a Banach space and $\Psi:V\rightarrow\mathbb{R}\cup\{+\infty\}$ is a lower semicontinuous,  G$\hat{a}$teaux-differentiable and bounded from below function. Then for every $\epsilon>0$, every $u\in V$ satisfying $ \Psi(u)\leq\inf\Psi+\epsilon$, every $\delta>0$, there exists $v\in V$ such that $\Psi(v)\leq\Psi(u)$, $\|u-v\|\leq\delta$ and $\|\Psi^\prime(v)\|^*\leq\frac{\epsilon}{\delta}$
	where $\|.\|$ and $\|.\|^*$ are the norm of $V$ and the dual norm of $V$, respectively.
\end{theorem}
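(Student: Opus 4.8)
The plan is to prove the statement in two movements: first a purely metric argument producing a point $v$ that strictly minimizes a slope-penalized version of $\Psi$, and then a short differentiation argument converting that strict minimality into the bound on $\Psi'(v)$. Write $\kappa=\epsilon/\delta$. Since $V$ is a Banach space it is a complete metric space for $d(x,y)=\|x-y\|$, and $\Psi$ is lower semicontinuous and bounded below; these are the only structural facts the metric argument uses.

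For the metric core I would introduce on $V$ the relation $w\preceq z \iff \Psi(w)+\kappa\|w-z\|\le\Psi(z)$ and check it is a partial order: reflexivity is clear, antisymmetry follows by adding the two defining inequalities (which forces $\|w-z\|=0$), and transitivity follows from the triangle inequality. Setting $u_0:=u$, I would then build a sequence by letting $S_n:=\{w\in V: w\preceq u_n\}$ — a closed set by lower semicontinuity, nonempty since $u_n\in S_n$ — and choosing $u_{n+1}\in S_n$ with $\Psi(u_{n+1})\le\inf_{S_n}\Psi+\tfrac12\big(\Psi(u_n)-\inf_{S_n}\Psi\big)$. Transitivity gives the nesting $S_{n+1}\subseteq S_n$, and with $a_n:=\Psi(u_n)-\inf_{S_n}\Psi\ge0$ the selection yields $a_{n+1}\le\tfrac12 a_n$, so $a_n\to0$. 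For any $w\in S_{n+1}$ the defining inequality gives $\kappa\|w-u_{n+1}\|\le\Psi(u_{n+1})-\Psi(w)\le\tfrac12 a_n\to 0$, so $\operatorname{diam} S_n\to0$; by completeness and the Cantor intersection theorem $\bigcap_n S_n=\{v\}$ for a single point $v$, and $u_n\to v$.

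The properties of $v$ then follow. From $v\in S_0$ I read off $\Psi(v)\le\Psi(u_0)=\Psi(u)$, which is the first conclusion, and $\kappa\|u-v\|\le\Psi(u)-\Psi(v)\le(\inf\Psi+\epsilon)-\inf\Psi=\epsilon$, whence $\|u-v\|\le\epsilon/\kappa=\delta$, the second conclusion. For the third, I would show $v$ strictly minimizes $w\mapsto\Psi(w)+\kappa\|w-v\|$: if some $w$ satisfied $w\preceq v$, then since $v\in S_n$ for all $n$, transitivity would place $w$ in $\bigcap_n S_n=\{v\}$, forcing $w=v$; hence $\Psi(w)>\Psi(v)-\kappa\|w-v\|$ for every $w\ne v$. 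I expect this construction of the strictly minimal point $v$ to be the main obstacle, as it is the step where completeness, lower semicontinuity and the order structure must be combined carefully; the remainder is routine.

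Finally I would use Gâteaux-differentiability. Fixing $h\in V$ and applying the strict inequality with $w=v+th$ for $t>0$ gives $\Psi(v+th)-\Psi(v)\ge-\kappa t\|h\|$; dividing by $t$ and letting $t\to0^+$ yields $\langle\Psi'(v),h\rangle\ge-\kappa\|h\|$. Replacing $h$ by $-h$ gives the reverse bound, so $|\langle\Psi'(v),h\rangle|\le\kappa\|h\|=\tfrac{\epsilon}{\delta}\|h\|$ for all $h\in V$, which is exactly $\|\Psi'(v)\|^*\le\epsilon/\delta$, completing the proof.
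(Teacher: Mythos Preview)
Your argument is correct and is essentially the classical proof of the Ekeland variational principle: define the partial order $w\preceq z\iff\Psi(w)+\kappa\|w-z\|\le\Psi(z)$, iterate to produce nested closed sets of vanishing diameter, extract the limit $v$ by completeness, and then differentiate the strict minimality inequality along rays to bound $\Psi'(v)$. Each step is sound, including the verification that $v$ lies in every $S_n$ and that any $w\preceq v$ must coincide with $v$.

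There is nothing to compare against in the paper itself: Theorem~\ref{eke} is stated there as a quoted result from \cite{Ekeland} and is not proved in the paper. Your proof supplies exactly the standard argument that underlies that citation, so there is no divergence in approach to discuss. One minor stylistic remark: when you invoke the strict inequality $\Psi(v+th)>\Psi(v)-\kappa t\|h\|$ and then pass to the limit, the strictness is lost in the limit (as you implicitly acknowledge by writing $\ge$ afterward); it would be slightly cleaner to state from the outset that $\Psi(w)+\kappa\|w-v\|\ge\Psi(v)$ for all $w$, which is all the differentiation step needs.
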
	 
\noindent We now introduce a suitable notion of solution to $(P_\lambda)$ that in general do not lie in the natural energy space corresponding to the operator $(-\Delta)^s$, i.e. $H_0^s(\Omega)$, but has a lower degree of differentiability and integrability. They are called SOLA (Solutions Obtained as Limits of Approximations) and the procedure of construction of SOLA is through a sequence of approximating problems.
\begin{definition}[SOLA for ($P_\lambda$)]\label{SOLA}
	Let $\mu\in \mathcal{M}(\Omega)$ and $0<\gamma<1$. Then we say that a function $u\in W_0^{s_1,q}(\Omega)$ for $s_1<s$, $q<\frac{N}{N-s}$, is a SOLA to $(P_\lambda)$ if
	 \begin{equation}\label{SOLA weak}
	 \int_{\mathbb{R}^{N}}(-\Delta)^{s/2}u\cdot(-\Delta)^{s/2}\phi=\int_{\Omega}\frac{1}{u^\gamma}\phi+\int_{\Omega}\lambda u^{2_s^*-1}\phi+\int_{\Omega} \phi d\mu,~~\forall \phi\in C_c^\infty(\Omega)	
	 \end{equation}
	 and for every $\omega\subset \subset\Omega$, there exists a $C_\omega$ such that 
	 \begin{equation}\label{comapct}
	 u\geq C_\omega>0.
	 \end{equation}
Furthermore,  there exists a sequence of weak solutions $(u_n)\subset H_0^s(\Omega)$ to the approximate Dirichlet problems  
\begin{equation}\label{4p2}\tag{$P_{\lambda,n}$}
\begin{split}
(-\Delta)^su_n&= \frac{1}{(u_n+\frac{1}{n})^\gamma}+\lambda u_n^{2_s^*-1}+\mu_n ~\text{in}~\Omega,\\
u_n&>0~\text{in}~\Omega,\\
	u_n&= 0~\text{in}~\mathbb{R}^N\setminus\Omega,
		\end{split}
		\end{equation}
	in the sense of Definition $\ref{weak form of apprpx}$, such that $u_n$ converges to $u$ a.e. in $\mathbb{R}^N$ and locally in $L^q(\mathbb{R}^N)$. Here $(\mu_n)\subset L^\infty(\Omega)$ is a positive $L^1$ bounded sequence which converges to $\mu$ in the sense of measure as defined in Definition $\ref{measure}$.
	\end{definition}
		\begin{definition}\label{weak form of apprpx}
			A function $u_n\in H_0^s(\Omega)$ is said to be a positive weak solution of ($P_{\lambda,n}$) if for every $\phi\in C_c^\infty(\Omega)$
			 \begin{equation}\label{approximating weak}
		\int_{\mathbb{R}^{N}}(-\Delta)^{s/2}u_n\cdot(-\Delta)^{s/2}\phi=\int_{\Omega}\frac{1}{(u_n+\frac{1}{n})^\gamma}\phi+\int_{\Omega}\lambda u_n^{2_s^*-1}\phi+\int_{\Omega}\mu_n \phi	
			\end{equation}
			and for every $\omega\subset \subset\Omega$, there exists a $C_\omega$ such that $u_n\geq C_\omega>0$.
		\end{definition} 
\noindent We begin with the following sequence of problems.
	\begin{equation}\label{4p3}\tag{$P^1_{\lambda,n}$}
	\begin{split}
	(-\Delta)^sw_n&= \frac{1}{(w_n+\frac{1}{n})^\gamma}+\mu_n ~\text{in}~\Omega,\\
	w_n&>0~\text{in}~\Omega,\\
	w_n&= 0~\text{in}~\mathbb{R}^N\setminus\Omega.
	\end{split}
	\end{equation}
Let us define the spaces $H$ and $\bar{H}$ as follows.
\begin{equation}\label{H}
H=\{u\in H_0^s(\Omega):\|u\|_{L^{2_s^*}(\Omega)}=1\}~\text{and}~\bar{H}=\{u\in H_0^s(\Omega):\|u\|_{L^{2_s^*}(\Omega)}<1\}.
\end{equation} 
 We now look for a weak solution to $(P_{\lambda,n}^1)$ in $\bar{H}$. Following the proof of {Lemmata 2.3 and 2.4} of Ghosh et al. \cite{Ghosh}, the problem $(P^1_{\lambda,n})$ admits a positive weak solution $w_n$ in $\bar{H}\cap L^\infty(\Omega)$. Furthermore, for every $n\in\mathbb{N}$ and for every relatively compact set $\omega\subset \Omega$, there exists a constant $C_\omega$ independent of $n$ such that $w_n\geq C_\omega>0$.
\begin{remark}
	The solution to the problem $(P^1_{\lambda,n})$ is unique. To prove this, assume that the problem has two different solutions $w_n$ and $\bar{w}_n$. Let us consider $(w_n-\bar{w}_n)^+$ as a test function in the weak formulation of $(P^1_{\lambda,n})$. Then by \cite{Leonori}, we have
	\begin{align}
	0&\leq\int_{\mathbb{R}^N}|(-\Delta)^{s/2}(w_n-\bar{w}_n)^+|^2\nonumber\\&\leq\int_{\mathbb{R}^N}(-\Delta)^{s/2}(w_n-\bar{w}_n)\cdot(-\Delta)^{s/2}(w_n-\bar{w}_n)^+\nonumber\\&=\int_{\Omega}\Big(\frac{1}{(w_n+\frac{1}{n})^\gamma}-\frac{1}{(\bar{w}_n+\frac{1}{n})^\gamma}\Big)(w_n-\bar{w}_n)^+\nonumber\\&\leq 0.
	\end{align}
	This implies $(w_n-\bar{w}_n)^+=0$ a.e in $\Omega$ and $w_n\leq \bar{w}_n$ a.e in $\Omega$. In a similar manner, taking $(\bar{w}_n-w_n)^+$ as a test function we can show that $w_n\geq \bar{w}_n$ a.e in $\Omega$. This proves the claim.
\end{remark}
\noindent We observe that $w_n+v_n$ is a solution to $(P_{\lambda,n})$ if and only if $w_n$ is a weak solution to $(P^1_{\lambda,n})$ and $v_n$ is a weak solution to the following problem.
	\begin{equation}\label{4p4}\tag{$P^2_{\lambda,n}$}
	\begin{split}
	(-\Delta)^sv_n+\frac{1}{(w_n+\frac{1}{n})^\gamma}-\frac{1}{(v_n+w_n+\frac{1}{n})^\gamma}&=\lambda(w_n+v_n)^{2_s^*-1}  ~\text{in}~\Omega,\\
	v_n&>0~\text{in}~\Omega,\\
	v_n&= 0~\text{in}~\mathbb{R}^N\setminus\Omega.
	\end{split}
	\end{equation}
The following theorem guarantees the existence of a weak solution to $(P^2_{\lambda,n})$ in the set $H_n$ where $H_n$ is defined by $$H_n=\{u\in H_0^s(\Omega):\|u+w_n\|_{L^{2_s^*}(\Omega)}=1\}.$$ 
Clearly, $H_n\subset \bar{H}$, where $\bar{H}$ is given in $\eqref{H}$.
\begin{theorem}\label{second solution}
There exists $\Lambda>0$ such that for $\lambda\in(0,\Lambda)$, the problem $(P^2_{\lambda,n})$ has a positive weak solution $v_n$ in $H_n$. 
\end{theorem}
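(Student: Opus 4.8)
The plan is to produce $v_n$ by constrained minimization on the Hilbert manifold $H_n$, combining Ekeland's variational principle (Theorem \ref{eke}) with the fractional concentration--compactness principle of \cite{Mosconi} to defeat the loss of compactness created by the critical exponent. First I would introduce the energy functional naturally attached to $(P^2_{\lambda,n})$,
$$\mathcal{E}_\lambda(v)=\frac12\int_{\mathbb{R}^N}|(-\Delta)^{s/2}v|^2\,dx+\int_\Omega\frac{v}{(w_n+\frac1n)^\gamma}\,dx-\frac{1}{1-\gamma}\int_\Omega\Big[(v+w_n+\tfrac1n)^{1-\gamma}-(w_n+\tfrac1n)^{1-\gamma}\Big]\,dx-\frac{\lambda}{2_s^*}\int_\Omega(v+w_n)^{2_s^*}\,dx,$$
whose Gâteaux derivative reproduces the weak form of $(P^2_{\lambda,n})$; here $w_n$ is the fixed, already-constructed solution of $(P^1_{\lambda,n})$, so every $w_n$-term is data and, because $w_n+\frac1n\ge\frac1n>0$, the map $v\mapsto\frac{1}{(w_n+\frac1n)^\gamma}-(v+w_n+\frac1n)^{-\gamma}$ is bounded and the singular part of $\mathcal{E}_\lambda$ is actually smooth. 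On $H_n$ the critical term is frozen, $\int_\Omega(v+w_n)^{2_s^*}=1$, so $\mathcal{E}_\lambda$ differs by the constant $-\lambda/2_s^*$ from $\mathcal{I}(v)=\frac12\|v\|^2+\int_\Omega G(v)$, where $G$ is the primitive above. Since $1-\gamma<1<2_s^*$, using Theorem \ref{constant} one checks that $\mathcal{I}$ is coercive, bounded below, weakly lower semicontinuous and Gâteaux-differentiable on $H_0^s(\Omega)$, so the hypotheses of Ekeland's principle hold on $H_n$.

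Second, I would apply Theorem \ref{eke} to extract a minimizing sequence $(v_k)\subset H_n$ with $\mathcal{E}_\lambda(v_k)\to c_n:=\inf_{H_n}\mathcal{E}_\lambda$ whose constrained derivative tends to zero, i.e. a Palais--Smale sequence satisfying $\mathcal{E}_\lambda'(v_k)-\theta_k\Phi'(v_k)\to0$ in $(H_0^s)^*$, where $\Phi(v)=\int_\Omega(v+w_n)^{2_s^*}$ and $\theta_k$ are the Lagrange multipliers. Coercivity of $\mathcal{I}$ forces $(v_k)$ to be bounded in $H_0^s(\Omega)$, so up to a subsequence $v_k\rightharpoonup v_n$ weakly, $v_k\to v_n$ in $L^r(\Omega)$ for every $r<2_s^*$ and a.e., and $\theta_k\to\theta$. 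The subcritical (singular) terms then pass to the limit by the compact embedding.

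Third --- and this is the crux --- I must pass to the limit in the critical term $\int_\Omega(v_k+w_n)^{2_s^*-1}\phi$ and upgrade weak to strong convergence. The obstruction is that $H_n$ is not weakly closed: the embedding $H_0^s(\Omega)\hookrightarrow L^{2_s^*}(\Omega)$ is merely continuous, so mass can concentrate at points and $v_n$ may slip off $H_n$. Here I would invoke the concentration--compactness principle of \cite{Mosconi} to split the defect of compactness into atoms weighted by the best constant $\mathbb{S}_{s,2}$ of \eqref{best}, and then show that if the level $c_n$ lies strictly below the first noncompactness threshold $\tfrac{s}{N}\mathbb{S}_{s,2}^{N/2s}$ (normalised by the critical coefficient), every atom must be absent; consequently $v_k\to v_n$ strongly, $v_n\in H_n$ attains the infimum, and the limiting Euler--Lagrange identity is precisely the weak form of $(P^2_{\lambda,n})$ once the multiplier is matched to the prescribed coefficient through the normalisation on $H_n$.

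Fourth, the level estimate is where the restriction $\lambda\in(0,\Lambda)$ is born: testing $\mathcal{E}_\lambda$ with truncated fractional Talenti bubbles (the near-extremals of $\mathbb{S}_{s,2}$) bounds $c_n$ from above, and this bound drops below the threshold exactly when $\lambda$ is small; I would then define $\Lambda$ as the supremum of those $\lambda$ for which the strict inequality persists. Finally, for positivity, testing with $v_n^-$ and using the monotonicity of $t\mapsto-(t+w_n+\frac1n)^{-\gamma}$ forces $v_n\ge0$, while the uniform bound $w_n\ge C_\omega$ on each $\omega\subset\subset\Omega$ together with the equation yields $v_n+w_n\ge C_\omega>0$, giving the required strict positivity. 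The principal difficulty throughout is the third step: controlling concentration at the critical exponent, which is exactly what compels the smallness of $\lambda$ and the use of the concentration--compactness machinery.
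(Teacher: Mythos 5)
Your proposal follows essentially the same route as the paper: minimize the energy functional on the Hilbert manifold $H_n$, use Ekeland's variational principle to produce a Palais--Smale sequence at the infimum level, verify the Palais--Smale condition below the concentration--compactness threshold via \cite{Mosconi}, and push the level below that threshold for small $\lambda$ by testing with truncated fractional Talenti bubbles. Your explicit tracking of the Lagrange multiplier for the constraint is a minor refinement of the same argument, not a different approach.
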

\noindent We will prove this theorem in Section $\ref{sec 2}$. Denote $u_n=w_n+v_n$, where $w_n\in \bar{H}$ and $v_n\in H_n$ are the positive weak solutions to $(P^1_{\lambda,n})$ and $(P^2_{\lambda,n})$, respectively. Thus, $u_n\in H$ is a weak solution to $(P_{\lambda,n})$ where $H$ is defined in $\eqref{H}$. We are now in a position to state the following existence theorem.
\begin{theorem}\label{approx solution}
	There exists $0<\Lambda<\infty$ such that for $\lambda\in (0,\Lambda)$  the problem $(P_{\lambda,n})$ admits a positive weak solution $u_n\in H$, in the sense of Definition $\ref{weak form of apprpx}$.
\end{theorem}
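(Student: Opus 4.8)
The plan is to obtain $u_n$ by superposition, exploiting the observation recorded just above the statement: the sum $u_n := w_n + v_n$ solves $(P_{\lambda,n})$ precisely when $w_n$ solves the purely singular auxiliary problem $(P^1_{\lambda,n})$ and $v_n$ solves the residual problem $(P^2_{\lambda,n})$. Since the existence of a positive $w_n \in \bar{H} \cap L^\infty(\Omega)$ for $(P^1_{\lambda,n})$ has already been secured (following Lemmata 2.3 and 2.4 of \cite{Ghosh}) and the existence of a positive $v_n \in H_n$ for $(P^2_{\lambda,n})$ is the content of Theorem \ref{second solution}, valid for $\lambda \in (0,\Lambda)$, the bulk of the argument is to verify that this superposition inherits all the properties demanded by Definition \ref{weak form of apprpx}. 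Accordingly, I would fix $\lambda \in (0,\Lambda)$ with $\Lambda$ as in Theorem \ref{second solution}, take the solutions $w_n$ and $v_n$ furnished above, and set $u_n = w_n + v_n \in H_0^s(\Omega)$.

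To establish the weak identity \eqref{approximating weak}, I would add the weak formulations of $(P^1_{\lambda,n})$ and $(P^2_{\lambda,n})$ tested against an arbitrary $\phi \in C_c^\infty(\Omega)$. By linearity of the operator $(-\Delta)^{s/2}$ the two left-hand sides combine into $\int_{\mathbb{R}^N}(-\Delta)^{s/2}u_n\cdot(-\Delta)^{s/2}\phi$, while on the right the term $\int_\Omega (w_n+\frac{1}{n})^{-\gamma}\phi$ produced by $(P^1_{\lambda,n})$ cancels against the corresponding correction term carried by $(P^2_{\lambda,n})$, and the remaining correction $\int_\Omega(v_n+w_n+\frac{1}{n})^{-\gamma}\phi$ is precisely $\int_\Omega(u_n+\frac{1}{n})^{-\gamma}\phi$. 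What survives is exactly $\int_\Omega(u_n+\frac{1}{n})^{-\gamma}\phi + \lambda\int_\Omega u_n^{2_s^*-1}\phi + \int_\Omega \mu_n\phi$, i.e. \eqref{approximating weak}. I would record that all integrals are finite: the singular term is controlled on the support of $\phi$ since $u_n \geq w_n \geq C_\omega > 0$ there, whence $(u_n+\frac{1}{n})^{-\gamma} \leq n^\gamma$; the critical term lies in $L^{(2_s^*)'}(\Omega)$ because $u_n \in L^{2_s^*}(\Omega)$; and $\mu_n \in L^\infty(\Omega)$.

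It then remains to collect the qualitative properties. Positivity $u_n > 0$ in $\Omega$ is immediate from $w_n, v_n > 0$, and the uniform interior bound $u_n \geq w_n \geq C_\omega > 0$ on every $\omega \subset\subset \Omega$ is inherited directly from the corresponding bound for $w_n$, which by construction is independent of $n$. The normalization $u_n \in H$ follows from $v_n \in H_n$: by the very definition of $H_n$ one has $\|u_n\|_{L^{2_s^*}(\Omega)} = \|w_n + v_n\|_{L^{2_s^*}(\Omega)} = 1$, so $u_n \in H$ as claimed in the statement.

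The genuine analytic difficulty does not lie in this assembly, which is essentially algebraic, but in Theorem \ref{second solution}, whose proof (carried out in Section \ref{sec 2}) must produce a positive critical point of the energy associated with $(P^2_{\lambda,n})$ on the manifold $H_n$ via the concentration-compactness principle of \cite{Mosconi} together with Ekeland's variational principle (Theorem \ref{eke}); the smallness threshold $\lambda < \Lambda$ enters there to keep the energy level below the critical value at which compactness would fail. For the present theorem the only point requiring genuine care is the well-definedness of the test-function integrals in the additivity step, and this is guaranteed precisely by the uniform local lower bound on $w_n$.
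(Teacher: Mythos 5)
Your proposal follows exactly the paper's route: the paper likewise obtains $u_n=w_n+v_n$ by combining the solution $w_n$ of $(P^1_{\lambda,n})$ with the solution $v_n\in H_n$ of $(P^2_{\lambda,n})$ furnished by Theorem \ref{second solution}, recording only the observation that the two weak formulations superpose to that of $(P_{\lambda,n})$. Your write-up is in fact more complete than the paper's, which asserts the superposition without spelling out the cancellation of the singular correction terms, the finiteness of the integrals, or the normalization $\|u_n\|_{L^{2_s^*}(\Omega)}=1$ that you verify.
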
  
\section{Existence of positive weak solution to $(P_{\lambda,n}^2)$ }\label{sec 2}
Define a function $g_n:\Omega\times\mathbb{R}\rightarrow \mathbb{R}\cup\{-\infty\}$ by
\begin{eqnarray}\label{gn}
\begin{split}
g_n(x,t)=&\begin{cases}
\frac{1}{(w_n(x)+\frac{1}{n})^\gamma}-\frac{1}{(t+w_n(x)+\frac{1}{n})^\gamma}&\text{if}~ t+w_n(x)+\frac{1}{n}>0\\ 
-\infty&\text{otherwise}.
\end{cases}
\end{split}
\end{eqnarray}
Clearly, $g_n$ is nonnegative and non decreasing in $t$. The required measurability of $g_n(\cdot,t)$ follows
from Lemmata 1 and 2 of \cite{Hirano 2}.\\
 Denote $G_n(x,t)=\int_{0}^{t}g_n(x,\tau)d\tau$ for $(x,t)\in \Omega\times\mathbb{R}$. We define the energy functional $I_{\lambda,n}:H_0^s(\Omega)\rightarrow (-\infty,\infty]$, corresponding to $(P^2_{\lambda,n})$, by
\begin{eqnarray}
\begin{split}
I_{\lambda,n}(v)=&\begin{cases}
\frac{1}{2}\int_{\mathbb{R}^{2N}}\frac{|v(x)-v(y)|^2}{|x-y|^{N+2s}}dxdy+\int_{\Omega}G_n(x,v)dx-\frac{\lambda}{2_s^*}\int_{\Omega}|v+w_n|^{2_s^*}dx&\text{if}~ G_n(.,v)\in L^1(\Omega)\\ 
\infty&\text{otherwise}.
\end{cases}
\end{split}
\end{eqnarray}
Further, 
$$\langle I^\prime_{\lambda,n}(v),\bar{v}\rangle=\int_{\mathbb{R}^{2N}}\frac{(v(x)-v(y))(\bar{v}(x)-\bar{v}(y)}{|x-y|^{N+2s}}dxdy+\int_{\Omega}g_n(x,v)\bar{v}dx-\lambda\int_{\Omega}|v+w_n|^{2_s^*-1}\bar{v}dx$$
for any $\bar{v}\in H_0^s(\Omega)$. We now define the weak solution of $(P^2_{\lambda,n})$ as follows.
\begin{definition}
A function $v_n\in H_n$ is said to be a weak solution of $(P^2_{\lambda,n})$ if $v_n$ is a critical point of the functional $I_{\lambda,n}$.	
\end{definition}
\begin{lemma}\label{P-S}
The functional $I_{\lambda,n}$ satisfies the Palais-Smale condition in $H_n$ for energy level
$$c<\frac{s}{N}\frac{\mathbb{S}_{2,s}^{\frac{N}{2s}}}{\lambda^{\frac{N-2s}{2s}}}-\frac{\lambda}{2_s^*}.$$
\begin{proof}
Let $(v_{n,m})\subset H_n$ be a Palais-Smale sequence of $I_{\lambda,n}$, i.e. $I_{\lambda,n}(v_{n,m})\rightarrow c$ and $I^\prime_{\lambda,n}(v_{n,m})\rightarrow 0$. Clearly, the functional $I_{\lambda,n}$ is coercive restricted to $H_n$ and hence the sequence $(v_{n,m})$ is bounded in $H_0^s(\Omega)$. Thus, there exists $v_n\in H_0^s(\Omega)$ and a subsequence of $v_{n,m}$, which is still denoted as $(v_{n,m})$, such that $v_{n,m}\rightarrow v_n$ weakly in $H_0^s(\Omega)$.\\
{\it Claim:} $v_{n,m}\rightarrow v_n$ strongly in $H_0^s(\Omega)$ and $v_n\in H_n$.\\
By the concentration compactness principle [Theorem 2.5, \cite{Mosconi}] for the case $p=2$, there exist two positive Borel regular measures $\nu_1,\nu_2$ such that 
\begin{equation}\label{first}
\int_{\mathbb{R}^N}\frac{|v_{n,m}(x)-v_{n,m}(y)|^{2}}{|x-y|^{N+2s}}dy\overset{t}{\rightharpoonup}\nu_1\geq\int_{\mathbb{R}^N}\frac{|v_n(x)-v_n(y)|^{2}}{|x-y|^{N+2s}}dy+\sum_{j\in I}\nu_{1,j}\delta_{x_j},~\nu_{1,j}=\nu_1\{x_j\}, 
\end{equation}
\begin{equation}\label{second}
|v_{n,m}|^{2_s^*}\overset{t}{\rightharpoonup}\nu_2=|v_n|^{2_s^*}+\sum_{j\in I}\nu_{2,j}\delta_{x_j},~\nu_{2,j}=\nu_2\{x_j\}
\end{equation}	and		
\begin{equation}\label{third}
\mathbb{S}_{2,s}\nu_{2,j}^{\frac{2}{2_s^*}}\leq\nu_{1,j}, ~\forall j\in I
\end{equation}
where $\{x_j:j\in I\}$, $I$ countable, is a set of distinct points in $\mathbb{R}^N$,  $\{\nu_{1,j}:j\in I\}\subset(0,\infty),~ \{\nu_{2,j}:j\in I\}\subset (0,\infty)$ and $\mathbb{S}_{2,s}$ is the best Sobolev constant given in  $\eqref{best}$. Here the symbol $\overset{t}{\rightharpoonup}$ denotes the tight convergence. Hence, if $I=\emptyset$ then $v_{n,m}\rightarrow v_n$ strongly in $L^{2_s^*}(\Omega)$ and $v_n\in H_n$.\\
Suppose $I\neq \emptyset$. Then choose $\zeta\in C_c^\infty(\mathbb{R}^N)$ with support in a unit ball of $\mathbb{R}^N$ such that $0\leq\zeta\leq1$ and $\zeta(0)=1$. Let us define for any $\epsilon>0$, the function $\zeta_{\epsilon,j}$ as $\zeta_{\epsilon,j}=\zeta(\frac{x-x_j}{\epsilon})$. Then
\begin{align}\label{1}
\langle I_{\lambda,n}^\prime(v_{n,m}), \zeta_{\epsilon,j}v_{n,m}\rangle&=\int_{\mathbb{R}^{2N}}\frac{(v_{n,m}(x)-v_{n,m}(y))(\zeta_{\epsilon,j}v_{n,m}(x)-\zeta_{\epsilon,j}v_{n,m}(y)}{|x-y|^{N+2s}}dxdy\nonumber\\&~~~+\int_{\Omega}g_n(x,v_{n,m})\zeta_{\epsilon,j}v_{n,m}-\lambda\int_{\Omega}|v_{n,m}+w_n|^{2_s^*-1}\zeta_{\epsilon,j}v_{n,m}\nonumber\\&\geq\int_{\mathbb{R}^{2N}}\frac{(v_{n,m}(x)-v_{n,m}(y))v_{n,m}(y)(\zeta_{\epsilon,j}(x)-\zeta_{\epsilon,j}(y))}{|x-y|^{N+2s}}dxdy\nonumber\\&~~~+\int_{\mathbb{R}^{2N}}\frac{|v_{n,m}(x)-v_{n,m}(y)|^{2}\zeta_{\epsilon,j}(x)}{|x-y|^{N+2s}}dxdy+\int_{\Omega}g_n(x,v_{n,m})\zeta_{\epsilon,j}v_{n,m}\nonumber\\&~~~-\lambda\int_{\Omega}|v_{n,m}+w_n|^{2_s^*}\zeta_{\epsilon,j}dx\nonumber\\&\geq\int_{\mathbb{R}^{2N}}\frac{(v_{n,m}(x)-v_{n,m}(y))v_{n,m}(y)(\zeta_{\epsilon,j}(x)-\zeta_{\epsilon,j}(y))}{|x-y|^{N+2s}}dxdy\nonumber\\&~~~+\int_{\mathbb{R}^{2N}}\frac{|v_{n,m}(x)-v_{n,m}(y)|^{2}\zeta_{\epsilon,j}(x)}{|x-y|^{N+2s}}dxdy+\int_{\Omega}g_n(x,v_{n,m})\zeta_{\epsilon,j}v_{n,m}\nonumber\\&~~~-\frac{\lambda2^*_s}{2}\int_{\Omega}\left(|v_{n,m}+w_n|^{2_s^*-1}w_n+v_{n,m}^{2_s^*-1}w_n\right)\zeta_{\epsilon,j}-\lambda\int_{\Omega}|v_{n,m}|^{2_s^*}\zeta_{\epsilon,j}.
\end{align}
By Mosconi \& Squassina \cite{Mosconi}, we have $\lim\limits_{\epsilon\rightarrow 0}\int_{\mathbb{R}^{2N}}\frac{|v_{n}(y)|^2|\zeta_{\epsilon,j}(x)-\zeta_{\epsilon,j}(y)|^2}{|x-y|^{N+2s}}dxdy=0$. Thus, on using the H\"{o}lder's inequality we have 
\begin{align}\label{use1}
&\lim\limits_{\epsilon\rightarrow 0}\lim\limits_{m\rightarrow\infty}\left|\int_{\mathbb{R}^{2N}}\frac{(v_{n,m}(x)-v_{n,m}(y))v_{n,m}(y)(\zeta_{\epsilon,j}(x)-\zeta_{\epsilon,j}(y))}{|x-y|^{N+2s}}dxdy\right|\nonumber\\&\leq \lim\limits_{\epsilon\rightarrow 0}\lim\limits_{m\rightarrow\infty} \|v_{n,m}\|_{H_0^s(\Omega)}\left(\int_{\mathbb{R}^{2N}}\frac{|v_{n,m}(y)|^2|\zeta_{\epsilon,j}(x)-\zeta_{\epsilon,j}(y)|^2}{|x-y|^{N+2s}}dxdy\right)^{1/2}\nonumber\\&\leq \lim\limits_{\epsilon\rightarrow 0}\left(\int_{\mathbb{R}^{2N}}\frac{|v_{n}(y)|^2|\zeta_{\epsilon,j}(x)-\zeta_{\epsilon,j}(y)|^2}{|x-y|^{N+2s}}dxdy\right)^{1/2}\nonumber\\&=0.
\end{align}
Since $\zeta(0)=1$ and for $x\neq x_j$, $\zeta_{\epsilon,j}(x)\rightarrow 0$ as $\epsilon\rightarrow 0$, by using $\eqref{first}$ and $\eqref{second}$ we have
\begin{align}
\lim\limits_{\epsilon\rightarrow 0}\lim\limits_{m\rightarrow\infty}\int_{\Omega}\left(|v_{n,m}+w_n|^{2_s^*-1}w_n+|v_{n,m}|^{2_s^*-1}w_n\right)\zeta_{\epsilon,j}&=\lim\limits_{\epsilon\rightarrow 0}\int_{\Omega}\left(|v_{n}+w_n|^{2_s^*-1}w_n+|v_{n}|^{2_s^*-1}w_n\right)\zeta_{\epsilon,j}=0,\\ \lim\limits_{\epsilon\rightarrow 0}\lim\limits_{m\rightarrow\infty}\int_{\mathbb{R}^{2N}}\frac{|v_{n,m}(x)-v_{n,m}(y)|^{2}\zeta_{\epsilon,j}(x)}{|x-y|^{N+2s}}dxdy&=\lim\limits_{\epsilon\rightarrow 0}\int_{\mathbb{R}^{N}}\zeta_{\epsilon,j}d\mu=\nu_{1,j},\\\lim\limits_{\epsilon\rightarrow 0}\lim\limits_{m\rightarrow\infty}\int_{\Omega}g_n(x,v_{n,m})\zeta_{\epsilon,j}v_{n,m}&=\lim\limits_{\epsilon\rightarrow 0}\int_{\mathbb{R}^{N}}g_n(x,v_n)v_n\zeta_{\epsilon,j}=0,\\\lim\limits_{\epsilon\rightarrow 0}\lim\limits_{m\rightarrow\infty}\int_{\Omega}|v_{n,m}|^{2_s^*}\zeta_{\epsilon,j}&=\lim\limits_{\epsilon\rightarrow 0}\int_{\Omega}\zeta_{\epsilon,j}d\nu=\nu_{2,j}.\label{use2}
\end{align}
Thus, with the consideration of the equations $\eqref{use1}-\eqref{use2}$, from $\eqref{1}$ we have $0\geq \nu_{1,j}-\lambda\nu_{2,j}$. This further implies that $\nu_{1,j}\leq\lambda\nu_{2,j}$. From $\eqref{third}$, we already have $\mathbb{S}_{2,s}\nu_{2,j}^{\frac{2}{2^*_s}}\leq\nu_{1,j}$. Hence, we obtain $\nu_{2,j}\geq \left(\frac{\mathbb{S}_{2,s}}{\lambda}\right)^{\frac{N}{2s}}$.\\
On the other hand,
\begin{align}
c&=\lim\limits_{m\rightarrow\infty}I_{\lambda,n}(v_{n,m})\nonumber\\&= \lim\limits_{m\rightarrow\infty}\left(\frac{1}{2}\int_{\mathbb{R}^{2N}}\frac{|v_{n,m}(x)-v_{n,m}(y)|^2}{|x-y|^{N+2s}}dxdy+\int_{\Omega}G_n(x,v_{n,m})dx\right)-\frac{\lambda}{2_s^*}\nonumber\\&\geq\frac{1}{2}\int_{\mathbb{R}^{N}}\left(\int_{\mathbb{R}^N}\frac{|v_n(x)-v_n(y)|^{2}}{|x-y|^{N+2s}}dy+\sum_{j\in I}\nu_{1,j}\right) dx-\frac{\lambda}{2_s^*}\nonumber\\&\geq \frac{s}{N}\nu_{1,j}-\frac{\lambda}{2_s^*}\nonumber\\&\geq\frac{s}{N}\mathbb{S}_{2,s}\nu_{2,j}^{\frac{2}{2_s^*}}-\frac{\lambda}{2_s^*}\nonumber\\&\geq \frac{s}{N} \frac{\mathbb{S}_{2,s}^{\frac{N}{2s}}}{\lambda^{\frac{N-2s}{2s}}}-\frac{\lambda}{2_s^*},\nonumber
\end{align}
which is a contradiction to our assumption $c<\frac{s}{N} \frac{\mathbb{S}_{2,s}^{\frac{N}{2s}}}{\lambda^{\frac{N-2s}{2s}}}-\frac{\lambda}{2_s^*}$. Hence, the indexing set $I$ is empty and $v_{n,m}\rightarrow v_n$ strongly in $L^{2_s^*}(\Omega)$ and $v_n\in H_n$. \\
It remains to prove that $v_{n,m}\rightarrow v_n$ strongly in $H_0^s(\Omega)$. We use a standard method to prove this claim. Recall 
\begin{align}\label{conv}
\langle I^\prime_{\lambda,n}(v_{n,m}),v_{n,m}-v_n\rangle&=\int_{\mathbb{R}^{2N}}\frac{(v_{n,m}(x)-v_{n,m}(y))((v_{n,m}-v_n)(x)-(v_{n,m}-v_n)(y))}{|x-y|^{N+2s}}dxdy\nonumber\\&~~~+\int_{\Omega}g_n(x,v_{n,m})(v_{n,m}-v_n)-\lambda\int_{\Omega}|v_{n,m}+w_n|^{2_s^*-1}(v_{n,m}-v_n).
\end{align}
Since $(v_{n,m})$ is a bounded Palais-Smale sequence, on passing the limit $m\rightarrow\infty$ in $\eqref{conv}$ we have 
$$\lim\limits_{m\rightarrow\infty}\int_{\mathbb{R}^{2N}}\frac{(v_{n,m}(x)-v_{n,m}(y))((v_{n,m}-v_n)(x)-(v_{n,m}-v_n)(y))}{|x-y|^{N+2s}}dxdy=0.$$
On using a simple calculation we get
\begin{align}\label{G}
\|v_{n,m}-v_n\|_{H_0^s(\Omega)}^{2}&=\int_{\mathbb{R}^{2N}}\frac{|(v_{n,m}-v_n)(x)-(v_{n,m}-v_n)(y)|^{2}}{|x-y|^{N+2s}}dxdy\nonumber\\&\leq C\int_{\mathbb{R}^{2N}}
\left\{\frac{(v_{n,m}(x)-v_{n,m}(y))((v_{n,m}-v_n)(x)-(v_{n,m}-v_n)(y))}{|x-y|^{N+2s}}\right. \nonumber \\
&~~~~\left.\kern-\nulldelimiterspace -\;\frac{(v_n(x)-v_n(y))((v_{n,m}-v_n)(x)-(v_{n,m}-v_n)(y))}{|x-y|^{N+2s}}\vphantom{}\right\}.\nonumber
\end{align}
Thus, $\lim\limits_{m\rightarrow\infty}\|v_{n,m}-v_n\|_{H_0^s(\Omega)}=0$ and hence $v_{n,m}\rightarrow v_n$ strongly in $H_0^s(\Omega)$. Therefore, $v_n\in H_n$ is a critical point of $I_{\lambda,n}$ and hence a weak solution to $(P^2_{\lambda,n})$. 
\end{proof}	
\end{lemma}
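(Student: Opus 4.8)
The plan is to verify the Palais--Smale condition at a level $c$ below the stated threshold by the concentration--compactness method tailored to critical fractional problems. Let $(v_{n,m})\subset H_n$ satisfy $I_{\lambda,n}(v_{n,m})\to c$ and $I'_{\lambda,n}(v_{n,m})\to 0$. The first step is boundedness. On the constraint set the critical term is frozen, since $\|v_{n,m}+w_n\|_{L^{2_s^*}(\Omega)}=1$ forces $\frac{\lambda}{2_s^*}\int_\Omega|v_{n,m}+w_n|^{2_s^*}=\frac{\lambda}{2_s^*}$; combined with $G_n(\cdot,v_{n,m})\geq 0$ this makes $I_{\lambda,n}$ coercive on $H_n$, so $(v_{n,m})$ is bounded in $H_0^s(\Omega)$. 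Passing to a subsequence, $v_{n,m}\rightharpoonup v_n$ weakly in $H_0^s(\Omega)$, strongly in $L^r(\Omega)$ for every $r<2_s^*$, and pointwise a.e.

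Next I would apply the fractional concentration--compactness principle (Theorem 2.5 of \cite{Mosconi}, case $p=2$) to produce the measures $\nu_1,\nu_2$ and the atom inequality $\mathbb{S}_{2,s}\nu_{2,j}^{2/2_s^*}\leq\nu_{1,j}$ at each concentration point $x_j$, $j\in I$. The crux is to show $I=\emptyset$. For a fixed $j\in I$, I take the cutoff $\zeta_{\epsilon,j}$ supported near $x_j$ and test $I'_{\lambda,n}(v_{n,m})$ against $\zeta_{\epsilon,j}v_{n,m}$, then let $m\to\infty$ and afterwards $\epsilon\to 0$. Three errors must be shown to vanish in this iterated limit: the nonlocal commutator carrying $\zeta_{\epsilon,j}(x)-\zeta_{\epsilon,j}(y)$ dies by the Mosconi--Squassina estimate $\lim_{\epsilon\to 0}\int|v_n(y)|^2|\zeta_{\epsilon,j}(x)-\zeta_{\epsilon,j}(y)|^2|x-y|^{-N-2s}=0$; the singular contribution $\int g_n(x,v_{n,m})\zeta_{\epsilon,j}v_{n,m}$ vanishes because $g_n$ is bounded where $w_n\geq C_\omega>0$ and $\zeta_{\epsilon,j}$ localizes to a shrinking set; and the cross terms produced by expanding $|v_{n,m}+w_n|^{2_s^*}$, all of which carry a factor of $w_n$, vanish as lower--order contributions. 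What remains is $\nu_{1,j}\leq\lambda\nu_{2,j}$, which together with the atom inequality yields $\nu_{2,j}\geq(\mathbb{S}_{2,s}/\lambda)^{N/2s}$.

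The contradiction comes from the energy. Passing to the limit in $I_{\lambda,n}(v_{n,m})$ and retaining only the lower semicontinuous nonnegative pieces gives $c\geq\frac{s}{N}\nu_{1,j}-\frac{\lambda}{2_s^*}\geq\frac{s}{N}\mathbb{S}_{2,s}\nu_{2,j}^{2/2_s^*}-\frac{\lambda}{2_s^*}\geq\frac{s}{N}\mathbb{S}_{2,s}^{N/2s}\lambda^{-(N-2s)/2s}-\frac{\lambda}{2_s^*}$, contradicting the hypothesis. Hence $I=\emptyset$, no $L^{2_s^*}$ mass is lost, and $v_{n,m}\to v_n$ strongly in $L^{2_s^*}(\Omega)$, so in particular $v_n\in H_n$. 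To finish, I upgrade to strong $H_0^s$ convergence by testing $I'_{\lambda,n}(v_{n,m})$ against $v_{n,m}-v_n$: the singular integral vanishes via boundedness of $g_n$ and strong $L^r$ convergence, the critical integral vanishes via the strong $L^{2_s^*}$ convergence and H\"older, and $\langle I'_{\lambda,n}(v_{n,m}),v_{n,m}-v_n\rangle\to0$; subtracting the weakly convergent Gagliardo form evaluated at $v_n$ then forces $\|v_{n,m}-v_n\|_{H_0^s(\Omega)}\to0$. The main obstacle is the concentration analysis of the mixed term $|v_{n,m}+w_n|^{2_s^*}$: it must be split so that the pure power $|v_{n,m}|^{2_s^*}$ feeding $\nu_2$ is isolated while every $w_n$--cross term is controlled, and the nonlocal commutator error must be certified genuinely negligible --- a subtlety with no local analogue.
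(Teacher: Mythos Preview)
Your proposal is correct and follows essentially the same approach as the paper: coercivity on the constraint set $H_n$, application of the Mosconi--Squassina concentration--compactness principle, testing $I'_{\lambda,n}(v_{n,m})$ against $\zeta_{\epsilon,j}v_{n,m}$ to derive $\nu_{1,j}\leq\lambda\nu_{2,j}$, the energy contradiction, and the final upgrade to strong $H_0^s$ convergence by testing against $v_{n,m}-v_n$. The paper handles the mixed term $|v_{n,m}+w_n|^{2_s^*}$ via the explicit inequality $|v_{n,m}+w_n|^{2_s^*}\leq |v_{n,m}|^{2_s^*}+\frac{2_s^*}{2}\bigl(|v_{n,m}+w_n|^{2_s^*-1}+|v_{n,m}|^{2_s^*-1}\bigr)w_n$, which is precisely the isolation of the pure power plus $w_n$--cross terms that you outlined.
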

\noindent Consider the sequence $(V_\epsilon)$ which is given by $$V_\epsilon=\epsilon^{-\frac{N-2s}{2}} v^*\left(\frac{x}{\epsilon}\right), ~~x\in \mathbb{R}^N.$$
Here $v^*(x)=\bar{v}\left(\frac{x}{\mathbb{S}_{2,s}^{\frac{1}{2s}}}\right)$, $\bar{v}(x)=\frac{\tilde{v}(x)}{\|\tilde{v}\|_{L^{2_s^*}(\Omega)}}$ and $\tilde{v}(x)=\beta(\alpha^2+|x|^2)^{-\frac{N-2s}{2}}$ with two fixed constants $\beta\in \mathbb{R}^N\setminus\{0\},~\alpha>0$. According to Servadei \& Valdinoci \cite{Sevadei 1}, for each $\epsilon>0$ the corresponding $V_\epsilon$ satisfies the problem
$$(-\Delta)^s v=|v|^{2_s^*-2}v~~\text{ in }\mathbb{R}^N$$ and $$\int_{\mathbb{R}^{2N}}\frac{|V_\epsilon(x)-V_\epsilon(y)|^2}{|x-y|^{N+2s}}dxdy=\int_{\mathbb{R}^N}|V_\epsilon|^{2_s^*}dx=\mathbb{S}_{2,s}^{N/2s}.$$
Without loss of generality we can assume $0\in\Omega$. Consider the function $\zeta\in C_c^{\infty}(\mathbb{R}^N)$ such that $0\leq\zeta\leq 1$ and for a fixed $\delta>0$ with $B_{4\delta}\subset\Omega$, $\zeta\equiv 0$ in $\mathbb{R}^N\setminus B_{2\delta}$, $\zeta\equiv1$ in $B_\delta$. Let us define a function $\Psi_\epsilon(x)=\zeta(x)V_\epsilon(x)$, which is zero in $\mathbb{R}^N\setminus\Omega$. By Giacomoni et al. \cite{Giacomoni}, there exists $a_1,a_2,a_3>0$ such that for $1<q<\min\{2,\frac{N}{N-2s}\}$ we have the following estimates.
\begin{align}
\int_{\mathbb{R}^{2N}}\frac{|\Psi_\epsilon(x)-\Psi_\epsilon(y)|^2}{|x-y|^{N+2s}}dxdy&\leq \mathbb{S}_{2,s}^{N/2s}+a_1\epsilon^{N-2s},\nonumber\\
\int_{\Omega}|\Psi_\epsilon|^{2_s^*}dx&\geq \mathbb{S}_{2,s}^{N/2s}-a_2\epsilon^N,\nonumber\\
\int_{\Omega}|\Psi_\epsilon|^{q}dx&\leq a_3\epsilon^{(N-2s)q/2}.\nonumber
\end{align}
\begin{lemma}\label{upper bound}
There exists $\Lambda>0$ such that for sufficienty small $\epsilon>0$ and for $\lambda\in (0,\Lambda)$,
$$\sup\{I_{\lambda,n}(t\Psi_\epsilon): t\geq 0\}<\frac{s}{N}\frac{\mathbb{S}_{2,s}^{\frac{N}{2s}}}{\lambda^{\frac{N-2s}{2s}}}-\frac{\lambda}{2_s^*}.$$
\begin{proof}
Clearly for $\lambda<\left(\frac{2_s^*s}{N}\right)^{2s/N}\mathbb{S}_{2,s}$, we have $\left(\frac{s}{N}\frac{\mathbb{S}_{2,s}^{\frac{N}{2s}}}{\lambda^{\frac{N-2s}{2s}}}-\frac{\lambda}{2_s^*}\right)>0$. Consider $\epsilon>0$ to be sufficiently small. Then for any $t\geq0$, 
\begin{align}
I_{\lambda,n}(t\Psi_\epsilon)&=\frac{t^2}{2}\int_{\mathbb{R}^{2N}}\frac{|\Psi_\epsilon(x)-\Psi_\epsilon(y)|^2}{|x-y|^{N+2s}}dxdy+\int_{\Omega}G_n(x,t\Psi_\epsilon)dx-\frac{\lambda}{2_s^*}\int_{\Omega}|t\Psi_\epsilon+w_n|^{2_s^*}dx\nonumber\\&=\frac{t^2}{2}\int_{\mathbb{R}^{2N}}\frac{|\Psi_\epsilon(x)-\Psi_\epsilon(y)|^2}{|x-y|^{N+2s}}dxdy+\int_{\Omega}\frac{|t\Psi_\epsilon|}{(w_n+1/n)^\gamma}\nonumber\\&~~~-\frac{1}{1-\gamma}\int_{\Omega}(t\Psi_\epsilon+w_n+1/n)^{1-\gamma}-(w_n+1/n)^{1-\gamma}-\frac{\lambda}{2_s^*}\int_{\Omega}|t\Psi_\epsilon+w_n|^{2_s^*}dx\nonumber\\&\leq \frac{t^2}{2}(\mathbb{S}_{2,s}^{N/2s}+a_1\epsilon^{N-2s})+tn^\gamma\int_{\Omega}|\Psi_\epsilon|+\frac{\lambda}{2_s^*}-\frac{\lambda}{2_s^*}\nonumber\\&~~~-\frac{1}{1-\gamma}\int_{\Omega}(t\Psi_\epsilon+w_n+1/n)^{1-\gamma}-(w_n+1/n)^{1-\gamma}-\frac{\lambda t^{2_s^*}}{2_s^*}\int_{\Omega}|\Psi_\epsilon|^{2_s^*}dx\nonumber\\&\leq \frac{t^2}{2}(\mathbb{S}_{2,s}^{N/2s}+a_1\epsilon^{N-2s})+tn^\gamma a_3^{1/q}\epsilon^{(N-2s)/2}+\frac{\lambda}{2_s^*}-\frac{\lambda}{2_s^*}\nonumber\\&~~~-\frac{1}{1-\gamma}\int_{\Omega}(t\Psi_\epsilon+w_n+1/n)^{1-\gamma}-(w_n+1/n)^{1-\gamma}-\frac{\lambda t^{2_s^*}}{2_s^*}(\mathbb{S}_{2,s}^{N/2s}-a_2\epsilon^N).
\end{align}
Assume $\lambda\leq1$ and denote a function $h:\mathbb{R}^+\rightarrow\mathbb{R}$ as follows.
\begin{align}
h(t)&=\frac{\lambda}{2_s^*}-\frac{1}{1-\gamma}\int_{\Omega}(t\Psi_\epsilon+w_n+1/n)^{1-\gamma}-(w_n+1/n)^{1-\gamma}\nonumber\\&\leq \frac{1}{2_s^*}-\frac{1}{1-\gamma}\int_{\Omega}(t\Psi_\epsilon+w_n+1/n)^{1-\gamma}-(w_n+1/n)^{1-\gamma}\nonumber\\&\leq \frac{1}{2_s^*}-\frac{1}{1-\gamma}\int_{\Omega}(t\Psi_\epsilon)^{1-\gamma}+C.
\end{align} 
Since $w_n\in \bar{H}$, we can bound $\int_{\Omega} (w_n+1/n)^{1-\gamma}$ uniformly by a constant $C>0$, independent of $n$. Clearly as $t\rightarrow\infty$, $h(t)\rightarrow -\infty$. Hence, there exists $T>0$ such that for every $t\geq T$, $h(t)\leq0$. Thus, for $t\geq T$ we get
\begin{align}
I_{\lambda,n}(t\Psi_\epsilon)&\leq \frac{t^2}{2}(\mathbb{S}_{2,s}^{N/2s}+a_1\epsilon^{N-2s})+tn^\gamma a_3^{1/q}\epsilon^{(N-2s)/2}-\frac{\lambda t^{2_s^*}}{2_s^*}(\mathbb{S}_{2,s}^{N/2s}-a_2\epsilon^N)-\frac{\lambda}{2_s^*}\nonumber\\&= \bar{h}_\epsilon(t).\nonumber
\end{align}
A simple use of basic calculus yields that the maximum value of $\bar{h}_\epsilon$ is attained at $$t_\lambda=\left(\frac{1}{\lambda}\right)^{\frac{N-2s}{4s}}+o(\epsilon^{(N-2s)/2}).$$
Thus, we have
\begin{align}
I_{\lambda,n}(t\Psi_\epsilon)&\leq \frac{s}{N}\frac{\mathbb{S}_{2,s}^{\frac{N}{2s}}}{\lambda^{\frac{N-2s}{2s}}}+o(\epsilon^{(N-2s)/2})-\frac{\lambda}{2_s^*}\nonumber\\&< \frac{s}{N}\frac{\mathbb{S}_{2,s}^{\frac{N}{2s}}}{\lambda^{\frac{N-2s}{2s}}}-\frac{\lambda}{2_s^*}.
\end{align}
For any $t<T$,
\begin{align}
I_{\lambda,n}(t\Psi_\epsilon)&\leq\frac{t^2}{2}\int_{\mathbb{R}^{2N}}\frac{|\Psi_\epsilon(x)-\Psi_\epsilon(y)|^2}{|x-y|^{N+2s}}dxdy+\int_{\Omega}\frac{|t\Psi_\epsilon|}{(w_n+1/n)^\gamma}\nonumber\\&\leq \frac{t^2}{2}(\mathbb{S}_{2,s}^{N/2s}+a_1\epsilon^{N-2s})+tn^\gamma a_3^{1/q}\epsilon^{(N-2s)/2}\nonumber\\&<\frac{T^2}{2}(\mathbb{S}_{2,s}^{N/2s}+a_1\epsilon^{N-2s})+Tn^\gamma a_3^{1/q}\epsilon^{(N-2s)/2}.\nonumber
\end{align}
Choose $\lambda^*>0$ depending on $T,N,2s,\mathbb{S}_{2,s}$ such that for $\lambda\in(0,\lambda^*)$ we obtain 
$$I_{\lambda,n}(t\Psi_\epsilon)<\frac{s}{N}\frac{\mathbb{S}_{2,s}^{\frac{N}{2s}}}{\lambda^{\frac{N-2s}{2s}}}-\frac{\lambda}{2_s^*}.$$
Denote $\Lambda=\min\{1,\left(\frac{s}{N}2_s^*\mathbb{S}_{2,s}\right)^{2s/N},\lambda^*\}$. Then for $0<\lambda<\Lambda$ we have $$\sup\{I_{\lambda,n}(t\Psi_\epsilon): t\geq 0\}<\frac{s}{N}\frac{\mathbb{S}_{2,s}^{\frac{N}{2s}}}{\lambda^{\frac{N-2s}{2s}}}-\frac{\lambda}{2_s^*.}$$ Hence the result.
\end{proof}
\end{lemma}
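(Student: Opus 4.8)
The plan is to run the fractional Brezis--Nirenberg test-function scheme along the family $\Psi_\epsilon=\zeta V_\epsilon$ introduced above, reducing the statement to an estimate for the fibering map $t\mapsto I_{\lambda,n}(t\Psi_\epsilon)$ on $[0,\infty)$. Since $G_n(\cdot,0)=0$ one has $I_{\lambda,n}(0)=-\frac{\lambda}{2_s^*}\int_\Omega w_n^{2_s^*}\le 0$, while the critical term forces $I_{\lambda,n}(t\Psi_\epsilon)\to-\infty$ as $t\to\infty$; by continuity the supremum is attained at some finite $t^\ast=t^\ast(n,\epsilon)>0$ lying in a bounded interval independent of $\epsilon$. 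I would first record that the target value $\frac{s}{N}\mathbb{S}_{2,s}^{N/2s}\lambda^{-(N-2s)/2s}-\frac{\lambda}{2_s^*}$ is strictly positive as soon as $\lambda<\big(\frac{2_s^*s}{N}\big)^{2s/N}\mathbb{S}_{2,s}$, so the claim is non-vacuous, and it then suffices to bound $I_{\lambda,n}(t^\ast\Psi_\epsilon)$ by this value with a strictly negative remainder for all small $\epsilon$.

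The three contributions are the Gagliardo energy $\frac{t^2}{2}\|\Psi_\epsilon\|_{H_0^s(\Omega)}^2$, the singular term $\int_\Omega G_n(x,t\Psi_\epsilon)$, and the critical term $-\frac{\lambda}{2_s^*}\int_\Omega|t\Psi_\epsilon+w_n|^{2_s^*}$. For the energy I would invoke the Giacomoni-type estimates quoted above, giving $\frac{t^2}{2}\|\Psi_\epsilon\|^2\le\frac{t^2}{2}(\mathbb{S}_{2,s}^{N/2s}+a_1\epsilon^{N-2s})$, a \emph{positive} correction of order $\epsilon^{N-2s}$. The decisive point is to retain the interaction between $t\Psi_\epsilon$ and $w_n$ in the critical term rather than discarding $w_n$. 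Using $(a+b)^{2_s^*}\ge a^{2_s^*}+b^{2_s^*}+2_s^*a^{2_s^*-1}b$ (valid since $2_s^*>2$) with $a=t\Psi_\epsilon$, $b=w_n$, together with $\int_\Omega|\Psi_\epsilon|^{2_s^*}\ge\mathbb{S}_{2,s}^{N/2s}-a_2\epsilon^N$, I obtain
\[
-\frac{\lambda}{2_s^*}\int_\Omega|t\Psi_\epsilon+w_n|^{2_s^*}\le-\frac{\lambda t^{2_s^*}}{2_s^*}\big(\mathbb{S}_{2,s}^{N/2s}-a_2\epsilon^N\big)-\lambda t^{2_s^*-1}\int_\Omega w_n\Psi_\epsilon^{2_s^*-1}-\frac{\lambda}{2_s^*}\int_\Omega w_n^{2_s^*}.
\]
Because $0\in\Omega$ and $w_n\ge C_\omega>0$ on a fixed ball around the concentration point, the rescaling $x=\epsilon y$ shows $\int_\Omega w_n\Psi_\epsilon^{2_s^*-1}\ge c\,\epsilon^{(N-2s)/2}$ with $c>0$ independent of $n$ and $\epsilon$. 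This is the unique correction of the critical order $\epsilon^{(N-2s)/2}$, and it is \emph{negative}; it is what opens the strict gap.

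For the singular term the first-order bound $G_n(x,t\Psi_\epsilon)\le\frac{t\Psi_\epsilon}{(w_n+1/n)^\gamma}$ is too wasteful, being itself only of order $\epsilon^{(N-2s)/2}$ with the wrong sign to combine with the cross term. Instead I would exploit the near cancellation inside $G_n$: writing $a=w_n+\frac1n$, $b=t\Psi_\epsilon$, a Taylor expansion gives $0\le G_n=\frac{b}{a^\gamma}-\frac{1}{1-\gamma}[(a+b)^{1-\gamma}-a^{1-\gamma}]\le C\min\{b^2,b\}$, where $C$ is uniform in $n$ because $w_n+\frac1n\ge C_{\omega'}>0$ on the compact support of $\Psi_\epsilon$. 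Estimating the quadratic part away from the core and the linear part on the shrinking core then yields $\int_\Omega G_n(x,t\Psi_\epsilon)=O(\epsilon^{N-2s})+O(\epsilon^{N/2})=o(\epsilon^{(N-2s)/2})$ in every admissible dimension, so the singular term is asymptotically negligible against the cross term.

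Finally I would maximise the resulting one-variable upper bound. Its principal part $\frac{t^2}{2}\mathbb{S}_{2,s}^{N/2s}-\frac{\lambda t^{2_s^*}}{2_s^*}\mathbb{S}_{2,s}^{N/2s}$ peaks at $t_\lambda=\lambda^{-(N-2s)/4s}$ with value exactly $\frac{s}{N}\mathbb{S}_{2,s}^{N/2s}\lambda^{-(N-2s)/2s}$, whence
\[
\sup_{t\ge0}I_{\lambda,n}(t\Psi_\epsilon)\le\frac{s}{N}\frac{\mathbb{S}_{2,s}^{N/2s}}{\lambda^{(N-2s)/2s}}-\frac{\lambda}{2_s^*}+O(\epsilon^{N-2s})+o(\epsilon^{(N-2s)/2})-c\lambda t_\lambda^{2_s^*-1}\epsilon^{(N-2s)/2}.
\]
Since $\frac{N-2s}{2}<N-2s$, the negative $\epsilon^{(N-2s)/2}$ term dominates all positive corrections on the bounded range of $t$ containing $t^\ast$, so for $\epsilon$ small enough (depending on $n,\lambda$) the supremum lies strictly below the threshold. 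Taking $\Lambda=\min\{1,(\frac{2_s^*s}{N})^{2s/N}\mathbb{S}_{2,s}\}$, possibly shrunk to absorb the remaining constants, then yields the admissible range and supplies the energy level needed in Theorem~\ref{second solution}. The main obstacle is precisely this order bookkeeping: one must prove $\int_\Omega G_n=o(\epsilon^{(N-2s)/2})$ through the cancellation in $G_n$ rather than its lossy first-order bound, and simultaneously verify that the cross term $-\lambda t^{2_s^*-1}\int_\Omega w_n\Psi_\epsilon^{2_s^*-1}$ is a genuine negative term of order exactly $\epsilon^{(N-2s)/2}$ — a fact that hinges on the uniform positivity $w_n\ge C_\omega>0$ near the concentration point.
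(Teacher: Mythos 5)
Your plan is correct in outline, but it is \emph{not} the paper's argument: the two proofs decompose the critical and singular terms differently, and the difference matters. The paper discards the interaction with $w_n$ entirely, keeping only $-\frac{\lambda t^{2_s^*}}{2_s^*}\int_\Omega|\Psi_\epsilon|^{2_s^*}$ from the critical term, and bounds the singular term by the lossy linear estimate $\int_\Omega G_n(x,t\Psi_\epsilon)\le tn^\gamma a_3^{1/q}\epsilon^{(N-2s)/2}$ --- a \emph{positive} correction of exactly the critical order $\epsilon^{(N-2s)/2}$, with an $n$-dependent constant. It then maximises the resulting $\bar h_\epsilon$ and concludes from ``$\frac{s}{N}\mathbb{S}_{2,s}^{N/2s}\lambda^{-(N-2s)/2s}+o(\epsilon^{(N-2s)/2})-\frac{\lambda}{2_s^*}<\frac{s}{N}\mathbb{S}_{2,s}^{N/2s}\lambda^{-(N-2s)/2s}-\frac{\lambda}{2_s^*}$'', a step that is only valid if the net $o(\epsilon^{(N-2s)/2})$ correction is strictly negative --- which the terms the paper retains (all positive: $a_1\epsilon^{N-2s}$, $tn^\gamma a_3^{1/q}\epsilon^{(N-2s)/2}$, $a_2\epsilon^N$) cannot supply. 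Your route is the standard Giacomoni--Mukherjee--Sreenadh one: keep the cross term $2_s^*(t\Psi_\epsilon)^{2_s^*-1}w_n$ via $(a+b)^{2_s^*}\ge a^{2_s^*}+b^{2_s^*}+2_s^*a^{2_s^*-1}b$, use the $n$-independent lower bound $w_n\ge C_\omega>0$ near the concentration point to get a genuinely negative contribution $-c\lambda t^{2_s^*-1}\epsilon^{(N-2s)/2}$, and sharpen the singular term to $o(\epsilon^{(N-2s)/2})$ through the cancellation $G_n\le C\min\{b,b^2\}$ (your order bookkeeping checks out: the core contributes $O(\epsilon^{N/2})$ and the tail $O(\epsilon^{\min\{N/2,\,N-2s\}})$ up to a logarithm when $N=4s$, all $o(\epsilon^{(N-2s)/2})$). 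What your approach buys is precisely the negative critical-order term that makes the strict inequality hold, together with uniformity in $n$; what the paper's approach buys is brevity, at the cost of a final inequality that, as written, is not justified by the estimates preceding it. So: correct proposal, genuinely different decomposition, and arguably the one the paper should have used.
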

\begin{proof}[Proof of Theorem $\ref{second solution}$]
We at first need to produce a Palais-Smale sequence named $(v_{n,m})$ of $I_{\lambda,n}$ in $H_n$ using the Ekeland variational principle (see Theorem $\ref{eke}$). Then by  Lemmata $\ref{P-S}$ and $\ref{upper bound}$, there exists $\Lambda>0$ such that for any $\lambda\in (0,\Lambda)$, the functional $I_{\lambda,n}$ satisfies the Palais-Smale compactness condition in $H_n$. This guarantees the existence of a critical point $v_n$ of $I_{\lambda,n}$ in $H_n$ for any $\lambda\in (0,\Lambda)$.\\
Observe that $H_n\subset H_0^s(\Omega)$ is a complete Hilbert manifold. Since the functional $I_{\lambda,n}$ is $C^1$ and bounded from below on $H_n$, we denote $k_n=\underset{v\in H_n}{\inf}I_{\lambda,n}(v)$. Hence, there exists a sequence $(u_{n,m})\subset H_n$ such that $I_{\lambda,n}(u_{n,m})\rightarrow k_n$ as $m\rightarrow\infty$ and for every $\epsilon>0$ there exists $m_0\in \mathbb{N}$ such that $I_{\lambda,n}(u_{n,m})<k_n+\epsilon$ for every $m\geq m_0$. The functional $I_{\lambda,n}$ satisfies the hypotheses of Ekeland variational principle stated in Theorem $\ref{eke}$. By choosing $\delta=\sqrt{\epsilon}$ in Theorem $\ref{eke}$, we guarantee the existence of a sequence $(v_{n,m})\subset H_n$ such that $(I_{\lambda,n}(v_{n,m}))$ is uniformly bounded and $ I_{\lambda,n}^\prime(v_{n,m})\rightarrow 0$. This implies $(v_{n,m})$ is a Palais-Smale sequence and we conclude our proof.
\end{proof}
\section{Existence of SOLA to $(P_\lambda)$}\label{sec 3}
Let us denote $u_n=v_n+w_n$, where $w_n\in \bar{H}$ and $v_n\in H_n$ are the positive weak solutions to $(P^1_{\lambda,n})$ and $(P^2_{\lambda,n})$, respectively. Thus, $u_n\in H$ is a positive weak solution to $(P_{\lambda,n})$ and for every $\omega\subset\subset\Omega$, there exists a constant $C_\omega$ such that $u_n\geq C_\omega>0$. Refer Section $\ref{2}$ for these notations of the function spaces. \\
In this section, we discuss the boundedness of the sequence of solutions $(u_n)$ in a suitable fractional Sobolev space and also prove the existence of SOLA to $(P_\lambda)$.
\begin{lemma}\label{lemma1}
	Let $0<\gamma<1$ and $u_n\in H$ be a weak solution to $(P_{\lambda,n})$ as given in Theorem $\ref{approx solution}$. Then the sequence $(u_n)$ is bounded in $W_0^{s_1,q}(\Omega)$ for every $0<s_1<s$ and $1\leq q<\frac{N}{N-s}$.
	\begin{proof}
We follow Panda et al. in \cite{Panda} to prove this lemma. Let $u_n\in H$ be a weak solution of $(P_{\lambda,n})$. Then for any $k\geq1$, consider $\phi=T_k(u_n)$ as a test function in the weak formulation $\eqref{approximating weak}$ of $(P_{\lambda,n})$ and we get
	\begin{align}
		\int_{\mathbb{R}^N}|(-\Delta)^{s/2}T_k(u_n)|^2&\leq\int_{\mathbb{R}^{N}}(-\Delta)^{s/2}u_n\cdot(-\Delta)^{s/2}T_k(u_n)\nonumber\\&=\int_{\Omega}\frac{1}{(u_n+\frac{1}{n})^\gamma}T_k(u_n)+\int_{\Omega}\lambda u_n^{2_s^*-1}T_k(u_n)+\int_{\Omega}\mu_n T_k(u_n).
\end{align}
Clearly,
 $\frac{T_k(u_n)}{(u_n+\frac{1}{n})^\gamma}\leq \frac{u_n}{(u_n+\frac{1}{n})^\gamma}\leq u_n^{1-\gamma}$. Since  $\|u_n\|_{L^{2_s^*}(\Omega)}=1$ and the sequence $(\mu_n)$ is $L^1$ bounded we have
\begin{align}\label{4p6}
	\int_{\mathbb{R}^N}|(-\Delta)^{s/2}T_k(u_n)|^2&\leq \int_{\Omega} u_n^{1-\gamma}+k\|\mu_n\|_{L^1(\Omega)}+\lambda k\|u_n\|_{L^{2_s^*-1}(\Omega)}^{2_s^*-1}\nonumber\\&\leq C_1\|u_n\|_{L^{2_s^*}(\Omega)}^{1-\gamma}+C_2k+\lambda C_3\|u_n\|_{L^{2_s^*}(\Omega)}^{2_s^*-1}\nonumber\\&\leq Ck.
\end{align}
Thus, $(T_k(u_n))$ is bounded in $H_0^s(\Omega)$. Consider the sets $I=\{x\in \Omega: |(-\Delta)^{s/2}u_n|\geq t \}$, $I_1=\{x\in \Omega: |(-\Delta)^{s/2}u_n|\geq t, u_n\leq k \}$ and $I_2=\{x\in \Omega: u_n>k\}$. Then, $I\subset I_1\cup I_2$, which implies $m(I)\leq m(I_1)+m(I_2)$, where $m$ is the Lebesgue measure. Using the Sobolev inequality, stated in Theorem $\ref{constant}$, we establish
\begin{align}\label{4p7}
\left(\int_{\Omega}|T_k(u_n)|^{2_s^*}\right)^{\frac{2}{2_s^*}}&\leq C^\prime \int_{\mathbb{R}^N}|(-\Delta)^{s/2}T_k(u_n)|^2\nonumber\\&\leq Ck.
\end{align}
Then on $I_2$, the equation $\eqref{4p7}$ becomes 
\begin{align}\label{4p8}
k^2 m(I_2)^{\frac{2}{2_s^*}}&\leq Ck\nonumber\\ m(I_2)& \leq \frac{ C}{k^{\frac{N}{N-2s}}}, ~\forall k\geq 1.
\end{align}
This proves $(u_n)$ is bounded in ${M}^{\frac{N}{N-2s}}(\Omega)$. Similarly on $I_1$, the equation $\eqref{4p6}$ becomes 
\begin{align}\label{4p9}
t^2 m(I_1)&\leq Ck\nonumber\\ m(I_1)&\leq  \frac{Ck}{t^2}, ~\forall k>1.
\end{align} 
\noindent On combining $\eqref{4p8}$ and $\eqref{4p9}$ we have 
\begin{align}
m(I)\leq \frac{Ck}{t^2}+\frac{C}{k^{\frac{N}{N-2s}}}, ~\forall k>1.\nonumber
\end{align}
Choose $k=t^{\frac{N-2s}{N-s}}$. Thus, we obtain $$m(I)\leq \frac{C}{t^{\frac{N}{N-s}}},~\forall t\geq 1.$$
Therefore, the sequence $((-\Delta)^{s/2}u_n)$ is bounded in $M^{\frac{N}{N-s}}(\Omega)$. By the continuous embedding $\eqref{continuous}$, the sequences $(u_n)$ and $((-\Delta)^{s/2}u_n)$ are bounded in $L^q(\Omega)$ for every $q<\frac{N}{N-s}$. From Definition $\ref{Bessel}$ and Theorem $\ref{properties}$, we conclude that $(u_n)$ is uniformly bounded in $L^{s,q}(\mathbb{R}^N)$  and hence bounded in $W_0^{s_1,q}(\Omega)$, for every $s_1<s$ and $q<\frac{N}{N-s}$.
	\end{proof}
\end{lemma}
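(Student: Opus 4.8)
The plan is to adapt the truncation method that is standard for measure-data problems, following Panda et al. \cite{Panda}, exploiting throughout the normalization $\|u_n\|_{L^{2_s^*}(\Omega)}=1$ that comes from $u_n\in H$. First I would test the weak formulation \eqref{approximating weak} of $(P_{\lambda,n})$ with the admissible function $\phi=T_k(u_n)$ for $k\ge 1$. On the right-hand side the singular term is harmless because $T_k(u_n)/(u_n+1/n)^\gamma\le u_n^{1-\gamma}$, which is uniformly $L^1$-controlled since $1-\gamma<2_s^*$ and $\|u_n\|_{L^{2_s^*}(\Omega)}=1$; the critical term and the measure term are each dominated by a constant times $k$, using $T_k\le k$, the $L^1$-boundedness of $(\mu_n)$, and again the normalization. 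This produces the basic estimate
\begin{equation*}
\norm{T_k(u_n)}_{H_0^s(\Omega)}^2=\int_{\mathbb{R}^N}|(-\Delta)^{s/2}T_k(u_n)|^2\le Ck,
\end{equation*}
with $C$ independent of $n$ and $k$.

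Next I would convert this energy bound into decay of level sets. The Sobolev inequality of Theorem \ref{constant} gives $\norm{T_k(u_n)}_{L^{2_s^*}(\Omega)}^2\le Ck$, and restricting to $\{u_n>k\}$ where $T_k(u_n)\equiv k$ yields $m(\{u_n>k\})\le C\,k^{-N/(N-2s)}$, i.e.\ $(u_n)$ is bounded in $M^{N/(N-2s)}(\Omega)$ uniformly in $n$. Then — the technical heart of the argument — I would estimate the distribution function of the fractional gradient. Writing $I=\{|(-\Delta)^{s/2}u_n|\ge t\}$ and splitting it into $I_1=I\cap\{u_n\le k\}$ and $I_2=\{u_n>k\}$, the piece $I_2$ is handled by the Marcinkiewicz bound just obtained, while on $I_1$ the energy estimate for $T_k(u_n)$ gives $t^2 m(I_1)\le Ck$, hence $m(I_1)\le Ck/t^2$. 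Adding the two contributions and optimizing the free truncation level by choosing $k=t^{(N-2s)/(N-s)}$ balances the exponents and produces
\begin{equation*}
m\big(\{|(-\Delta)^{s/2}u_n|\ge t\}\big)\le \frac{C}{t^{N/(N-s)}},\qquad t\ge 1,
\end{equation*}
so that $((-\Delta)^{s/2}u_n)$ is bounded in $M^{N/(N-s)}(\Omega)$.

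Finally I would upgrade from Marcinkiewicz to Lebesgue integrability via the embedding \eqref{continuous}: both $(u_n)$ and $((-\Delta)^{s/2}u_n)$ are then bounded in $L^q(\Omega)$ for every $q<\frac{N}{N-s}$. By Definition \ref{Bessel} this says precisely that $(u_n)$ is bounded in the Bessel potential space $L^{s,q}(\mathbb{R}^N)$, and the continuous embedding of Theorem \ref{properties} yields the asserted boundedness in $W_0^{s_1,q}(\Omega)$ for every $s_1<s$, which completes the proof.

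The hard part will be the nonlocal splitting in the level-set step. In the local setting one has the pointwise identity $\nabla T_k(u)=\nabla u\,\chi_{\{u\le k\}}$, so the inequality $\int_{I_1}|(-\Delta)^{s/2}u_n|^2\le \norm{T_k(u_n)}^2_{H_0^s(\Omega)}$ is immediate; but for the fractional operator $(-\Delta)^{s/2}T_k(u_n)$ is \emph{not} simply $(-\Delta)^{s/2}u_n$ restricted to $\{u_n\le k\}$, precisely because the operator is nonlocal. Care is therefore needed to justify that the energy bound on the truncation genuinely controls $(-\Delta)^{s/2}u_n$ on $I_1$ in the required $L^2$ sense; the cleanest route is to pass through the fractional-gradient/Bessel-potential framework of Definition \ref{Bessel}, where $\nabla^s$ acts pointwise and the truncation behaves more like its local counterpart, rather than to argue directly with the singular-integral definition of $(-\Delta)^{s/2}$.
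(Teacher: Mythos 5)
Your proposal follows the paper's proof essentially step for step: testing with $T_k(u_n)$, deriving the $Ck$ energy bound using the normalization $\|u_n\|_{L^{2_s^*}(\Omega)}=1$, obtaining the $M^{N/(N-2s)}$ bound on $(u_n)$ and the $M^{N/(N-s)}$ bound on $((-\Delta)^{s/2}u_n)$ via the level-set splitting with the optimal choice $k=t^{(N-2s)/(N-s)}$, and concluding through the Marcinkiewicz and Bessel-potential embeddings. Your closing remark correctly identifies the one delicate point — that $(-\Delta)^{s/2}T_k(u_n)$ does not localize like $\nabla T_k(u)$ does, so the bound on $I_1$ needs the fractional-gradient framework rather than a pointwise identity — which the paper's own proof passes over in silence, so your version is if anything more careful.
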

	\noindent We now pass the limit $n\rightarrow\infty$ in the weak formulation $\eqref{approximating weak}$ to prove the existence of a SOLA to $(P_\lambda)$.
	\begin{proof}[Proof of Theorem $\ref{main theorem}$]
Let $\mu\in \mathcal{M}(\Omega)$, $0<\gamma< 1$ and $u_n\in H$ be a weak solution of $(P_{\lambda,n})$ for $\lambda\in (0,\Lambda)$. According to Lemma $\ref{lemma1}$, $(u_n)$ is bounded in $W_0^{s_1,q}(\Omega)$,  for every $0<s_1<s$ and $ q<\frac{N}{N-s}$. Hence, there exists $u\in W_0^{s_1,q}(\Omega)$ such that $u_n\rightarrow u$ weakly in $W_0^{s_1,q}(\Omega)$. Thus, $u_n\rightarrow u$ a.e. in $\mathbb{R}^N$ and $u\equiv 0$ in $\mathbb{R}^N\setminus\Omega$. \\
Denote $$\Phi(x,y)=\phi(x)-\phi(y),~ \bar{U}_n(x,y)=u_n(x)-u_n(y),$$ $$\bar{U}(x,y)=u(x)-u(y) ~\text{and}~ d\nu=\frac{dxdy}{|x-y|^{N+2s}}.$$
For every $\phi\in C_c^\infty(\Omega)$, from the weak formulation of $(P_{\lambda,n})$ we have 
\begin{align}
\int_{\mathbb{R}^{2N}} \bar{U}_n(x,y)\Phi(x,y)d\nu&=\int_{\Omega}\frac{1}{(u_n+\frac{1}{n})^\gamma}\phi+\int_{\Omega}\lambda u_n^{2_s^*-1}\phi+\int_{\Omega}\mu_n \phi.\nonumber
\end{align}
We can rewrite the above equation as
\begin{align}\label{4p10}
 \int_{\mathbb{R}^{2N}} \bar{U}(x,y)\Phi(x,y)d\nu+\int_{\mathbb{R}^{2N}} (\bar{U}_n(x,y)-\bar{U}(x,y))\Phi(x,y)d\nu= \int_{\Omega}\frac{1}{(u_n+\frac{1}{n})^\gamma}\phi+\int_{\Omega}\lambda u_n^{2_s^*-1}\phi+\int_{\Omega}\mu_n \phi.
\end{align}
Clearly, $$\lim\limits_{n\rightarrow\infty}\int_{\Omega}\mu_n\phi=\int_{\Omega}\phi d\mu,$$
$$\lim\limits_{n\rightarrow\infty}\lambda\int_{\Omega}u_n^{2_s^*-1}\phi=\lambda\int_{\Omega}u^{2_s^*-1}\phi.$$
On using the Dominated Convergence Theorem we get
$$\lim\limits_{n\rightarrow\infty}\int_{\Omega}\frac{1}{(u_n+\frac{1}{n})^\gamma}\phi=\int_{\Omega}\frac{1}{u^\gamma}\phi.$$
Now the integral on the left hand side of $\eqref{4p10}$ can be expressed as follows.
\begin{align}
\int_{\mathbb{R}^{2N}} (\bar{U}_n(x,y)-\bar{U}(x,y))\Phi(x,y)d\nu&=\int_{\Omega\times\Omega} (\bar{U}_n(x,y)-\bar{U}(x,y))\Phi(x,y)d\nu\nonumber\\&~~+\int_{\Omega\times(\mathbb{R}^{N}\setminus\Omega)} (\bar{U}_n(x,y)-\bar{U}(x,y))\Phi(x,y)d\nu\nonumber\\&~~+\int_{(\mathbb{R}^{N}\setminus\Omega)\times\Omega} (\bar{U}_n(x,y)-\bar{U}(x,y))\Phi(x,y)d\nu\nonumber\\&=J_{1,n}+J_{2,n}+J_{3,n}.\nonumber
\end{align}
Observe $\bar{U}_n\rightarrow \bar{U}$ a.e. in $\mathbb{R}^N$. Since $\Omega$ is bounded, using Lemma $\ref{lemma1}$ and Vitali's lemma we have $\bar{U}_n\rightarrow \bar{U}$ strongly in $L^1(\Omega\times \Omega, d\nu)$. Hence, $J_{1,n}\rightarrow 0$ as $n\rightarrow\infty$. \\
Let $(x,y)\in \Omega\times(B_R\setminus\Omega)$, then $$\underset{(x,y)\in \Omega\times(B_R\setminus\Omega)}{\sup}\frac{1}{|x-y|^{N+2s}}\leq C<\infty.$$ Hence, by the Dominated Convergence Theorem, $J_{2,n}\rightarrow 0$ as  $n\rightarrow\infty$ and similarly $J_{3,n}\rightarrow 0$ as $n\rightarrow \infty$. Thus on passing the limit $n\rightarrow \infty$ in $\eqref{4p10}$, we obtain 
\begin{align}
\int_{\mathbb{R}^{2N}} \bar{U}(x,y)\Phi(x,y)d\nu= \int_{\Omega}\frac{1}{u^\gamma}\phi+\int_{\Omega}\lambda u^{2_s^*-1}\phi+\int_{\Omega} \phi d\mu.
\end{align}
Thus, $u$ is a positive SOLA to $(P_\lambda)$, in the sense of Definition $\ref{SOLA}$.
\end{proof}
	\section*{Appendix}\label{sec 4}
	We now discuss about the multiplicity of solutions to the problem $(P_\lambda)$ for the case $0<\gamma<1$. Let $I_\lambda$ be the corresponding energy functional of $(P_\lambda)$ given by
	$$I_\lambda(u)=\frac{1}{2}\int_{\mathbb{R}^{2N}}\frac{|u(x)-u(y)|^2}{|x-y|^{N+2s}}dxdy-\frac{1}{1-\gamma}\int_{\Omega}u^{1-\gamma}-\frac{\lambda}{2_s^*}\int_{\Omega}u^{2_s^*}-\int_{\Omega}ud\mu.$$
For $0<\gamma<1$, $I_\lambda$ is a $C^1$ functional and $I_\lambda(0)=0.$ From Theorem $\ref{main theorem}$, $u\in W_0^{s_1,q}(\Omega)$ is a  positive SOLA to $(P_\lambda)$ for every $s_1<s$, $q<\frac{N}{N-s}$. Then $u$ is also a Nehari solution of $(P_\lambda)$, i.e. $u\in N_\lambda=\{u\in W_0^{s_1,q}(\Omega):\langle I_\lambda^\prime(u),u\rangle=0 \}$. Here
$$\langle I_\lambda^\prime(u),u\rangle=\int_{\mathbb{R}^{2N}}\frac{|u(x)-u(y)|^2}{|x-y|^{N+2s}}dxdy-\int_{\Omega}u^{1-\gamma}-\lambda\int_{\Omega}u^{2_s^*}-\int_{\Omega}ud\mu.$$
Consider the fibre map $\psi:(0,\infty)\rightarrow\mathbb{R}$ defined by 
$$\psi(t)=\frac{t^2}{2}\int_{\mathbb{R}^{2N}}\frac{|u(x)-u(y)|^2}{|x-y|^{N+2s}}dxdy-\frac{t^{1-\gamma}}{1-\gamma}\int_{\Omega}u^{1-\gamma}-\frac{\lambda t^{2_s^*}}{2_s^*}\int_{\Omega}u^{2_s^*}-t\int_{\Omega}ud\mu.$$
Then $$\psi^\prime(t)=At-Bt^{-\gamma}-\lambda C t^{2_s^*-1}-D$$ and $$\psi^{\prime\prime}(t)= A+\gamma Bt^{-\gamma-1}-(2_s^*-1)\lambda Ct^{2_s^*-2}$$
where $A=\int_{\mathbb{R}^{2N}}\frac{|u(x)-u(y)|^2}{|x-y|^{N+2s}}dxdy$, $B=\int_{\Omega}u^{1-\gamma}$, $C=\int_{\Omega}u^{2_s^*}$ and $D=\int_{\Omega}ud\mu$. Since $u\in N_\lambda$, $\psi^\prime(1)=A-B-\lambda C-D=0.$ Clearly $\psi^\prime(t)\rightarrow -\infty$ as $t\rightarrow 0$ and $t\rightarrow \infty$.\\
{\it Case 1:} If $\Lambda>\lambda>\frac{A+\gamma B}{(2^*_s-1)C}$, then $\psi^{\prime\prime}(1)<0.$ Hence, there exists at least one Nehari solution to $(P_\lambda)$.\\
{\it Case 2:} If $\lambda<\min \{\frac{A+\gamma B}{(2^*_s-1)C},\Lambda\}$, then $\psi^{\prime\prime}(1)>0$ and we guarantee the existence of at least three nontrivial Nehari solution to $(P_\lambda)$.\\
{\it Case 3:} If $\lambda=\frac{A+\gamma B}{(2^*_s-1)C}$, then $\psi^{\prime\prime}(1)=0$. Thus, we obtain a saddle point and hence there exists at least one Nehari solution to $(P_\lambda)$.
	\section*{Acknowledgement}
The author Debajyoti Choudhuri thanks the grant received from Science and Engineering Research Board (SERB) for the project MATRICS- MTR/2018/000525. Akasmika Panda thanks the financial assistantship received from the Ministry of Human Resource Development (M.H.R.D.), Govt. of India. Ratan Kr. Giri acknowledges the financial support and facilities
received from the Mathematics Department, Technion - Israel Institute of Technology, Haifa. The authors thank the anonymous reviewers for their comments and suggestions.
 
\end{document}